\documentclass{article}

\usepackage[margin=1in]{geometry}
\usepackage[numbers]{natbib}

\usepackage[utf8]{inputenc} 
\usepackage[T1]{fontenc}    
\usepackage[colorlinks,linkcolor=blue,filecolor=blue,citecolor=black,urlcolor=blue]{hyperref}       
\usepackage{url}    

\usepackage{booktabs}      
\usepackage{amsfonts}       
\usepackage{nicefrac}       
\usepackage{microtype}      
\usepackage[table]{xcolor}         
\usepackage{tocvsec2}
\usepackage{titletoc}

\usepackage{xspace}

\usepackage{amsmath}
\usepackage{amsthm}
\usepackage{amssymb}

\usepackage{multicol}
\usepackage{multirow}
\usepackage{makecell}
\usepackage{enumitem}
\usepackage{wrapfig}
\usepackage{dsfont}

\usepackage{graphicx,epstopdf}
\usepackage{subfigure} 

\usepackage{algorithm}
\usepackage{algorithmic}

\usepackage{threeparttable, booktabs}
\usepackage{mathtools}
\usepackage{float}

\usepackage{hyperref}[pdfusetitle]
\hypersetup{colorlinks,urlcolor=blue}
\usepackage{pifont}       % \ding{xx}
\usepackage{bbding}       % \Checkmark,\XSolid,... (需要和pifont宏包共同使用)

\newcommand{\cmark}{{\color{green} \ding{51}}}
\newcommand{\xmark}{{\color{red} \ding{55}}}

\usepackage{tabularray}

\usepackage[skins,listings]{tcolorbox}

\newtcblisting{tikzexample}{
    sidebyside,
    center lower,
    bicolor,
    colbacklower=white,
    sharp corners,
    frame engine=empty
}

\usepackage{quiver}

\restylefloat{algorithm} 
\newtheorem{assumption}{Assumption}
\newtheorem{theorem}{Theorem}
\newtheorem{remark}{Remark}
\newtheorem{lemma}{Lemma}
\newtheorem{example}{Example}

\newtheorem{corollary}{Corollary}

\usepackage{diagbox}

\def\r{{\boldsymbol{r}}}

\def\u{{\boldsymbol{u}}}

\def\w{\boldsymbol{w}}

\allowdisplaybreaks

\title{On the Convergence of Stochastic Gradient Descent with Perturbed Forward-Backward Passes}

\author{
  Boao Kong$^1$$^*$\\
  \texttt{kongboao@stu.pku.edu.cn} \\
   \and
  Hengrui Zhang$^2$$^*$ \\
  \texttt{2022141210007@stu.scu.edu.cn} \\
  \and
  Kun Yuan$^1$$^\dagger$ \\
  \texttt{kunyuan@pku.edu.cn} \\
}

\begin{document}
\setlength{\parindent}{0pt}
\setlength{\parskip}{0.5em}
\maketitle

\def\thefootnote{$^1$}\footnotetext{Peking University.}
\def\thefootnote{$^2$}\footnotetext{Sichuan University.}
\def\thefootnote{$^*$}\footnotetext{Equal contribution.}
\def\thefootnote{$^\dagger$}\footnotetext{Corresponding author.}

\begin{abstract}
We study stochastic gradient descent (SGD) for composite optimization problems with $N$ sequential operators subject to perturbations in both the forward and backward passes. Unlike classical analyses that treat gradient noise as additive and localized, perturbations to intermediate outputs and gradients cascade through the computational graph, compounding geometrically with the number of operators. We present the first comprehensive theoretical analysis of this setting. Specifically, we characterize how forward and backward perturbations propagate and amplify within a single gradient step, derive convergence guarantees for both general non-convex objectives and functions satisfying the Polyak--\L{}ojasiewicz condition, and identify conditions under which perturbations do not deteriorate the asymptotic convergence order. As a byproduct, our analysis furnishes a theoretical explanation for the gradient spiking phenomenon widely observed in deep learning, precisely characterizing the conditions under which training recovers from spikes or diverges. Experiments on logistic regression with convex and non-convex regularization validate our theories, illustrating the predicted spike behavior and the asymmetric sensitivity to forward versus backward perturbations.
\end{abstract}

{\small
\tableofcontents}

\allowdisplaybreaks

\section{Introduction}
This paper investigates the following optimization problem with a composite objective involving \( N \geq 2 \) operators: 
\begin{subequations}
\label{pipeline parallel}
\begin{align}
\min_{\w \in\mathbb{R}^d}&\quad \ell(\w):=\mathbb{E}_{x\in\mathcal{D}}\Big[\mathcal{L}(x;\w):=\mathcal{F}(x;\w)+\mathcal{R}(\w)\Big], \label{eq:pipeline-1}\\
\text{s.t.}&\quad \mathcal{F}(x;\w):=y_N,\quad y_i = f_i(y_{i-1}, w_i), \quad \forall i \in \{1, \ldots, N\}, \label{eq:pipeline-2}
\end{align}
\end{subequations}
where each operator $f_i: \mathbb{R}^{d_{i-1}} \times \mathbb{R}^{d_{w_i}} \to \mathbb{R}^{d_i}$ represents the $i$-th component of the composite function $\mathcal{F}(x; \w)$, with $y_i \in \mathbb{R}^{d_i}$ and $w_i \in \mathbb{R}^{d_{w_i}}$. The global optimization variable is defined as $\w := (w_1^{\top}, w_2^{\top}, \ldots, w_N^{\top})^{\top} \in \mathbb{R}^d$, where each $w_i$ corresponds to operator $f_i$, and the total dimension is $d = d_{w_1} + d_{w_2} + \cdots + d_{w_N}$. Here, the term $\mathcal{R}(\w)$ denotes a differentiable regularizer used to prevent model overfitting. Quantity $x$ denotes a data sample drawn from the distribution $\mathcal{D}$, and $y_i$ represents the intermediate output of the $i$-th operator $f_i$, with the initial condition $y_0 = x\in\mathbb{R}^{d_0}$. 

\subsection{Motivating Examples}
The general formulation~\eqref{pipeline parallel} arises in various scenarios involving sequential decision-making and operator-wise processing. Several motivating examples are provided below.

\vspace{1mm}
\textbf{Example 1: Deep neural network.} A deep neural network (DNN) is a prime example of the problem in \eqref{pipeline parallel}, where each operator $f_i$ represents a neural network layer ~\cite{carreira2014distributed}. Each layer transforms its input $y_{i-1}$ using learnable parameters $w_i$ to produce the output $y_i$. The composite function $\mathcal{F}(x; \w)$ consists of these successive transformations, with the final layer generating the value of the loss function. The hierarchical structure and nonlinearity of DNNs allow them to model complex data relationships~\cite{hornik1989multilayer}, making them particularly effective for tasks such as image recognition, natural language processing, and reinforcement learning ~\cite{he2016deep,vaswani2017attention,mnih2015human}.

\vspace{1mm}
\textbf{Example 2: Finite horizon optimization.} Traditional optimization seeks to ensure good performance as the number of iterations \( T \) approaches infinity~\cite{nesterov2018lectures}. In contrast, finite horizon optimization~\cite{zhang2024finite} is to identify optimal hyperparameter specifically for a small and finite $T$. For instance, to determine optimal learning rate schedules for gradient descent over $T$ iterations, we formulate the problem~\cite{drori2014performance}:
\begin{subequations}
\label{eq:fho}
\begin{align}
\min_{\{\gamma_1,\cdots,\gamma_{T}\}}&\quad \max_{g\in\mathcal{G}}\ \varepsilon(x_T),\\
\text{s.t.}\hspace{6mm} &\quad x_{t} = x_{t-1} - \gamma_{t} \nabla g(x_{t-1}), \quad \forall t \in \{1, \ldots, T\}.
\end{align}
\end{subequations}
Here, $x_T$ is obtained through $T$ gradient descent steps with learning rate $\gamma_t$ at iteration $t$. Function $\varepsilon(\cdot)$ evaluates the performance of $x_T$, and $\mathcal{G}$ denotes the objective function class. Problem~\eqref{eq:fho} seeks optimal $\{\gamma_1,\ldots,\gamma_{T}\}$ to achieve the best possible performance after $T$ iterations~\cite{gupta2023nonlinear}. By setting $\w=\{\gamma_1,\ldots,\gamma_{T}\}$, $y_{T+1}=\max_{g\in \mathcal{G}}\varepsilon(x_T)$, $y_t = x_t$ for $t\in\{1,\ldots, T\}$, $y_0 = x_0$, $f_t(y_{t-1},\gamma_t) = y_{t-1} - \gamma_{t} \nabla g(y_{t-1})$, and $\mathcal{F}(x;\w) = y_{T+1}$, problem~\eqref{eq:fho} reduces to the deterministic version of problem~\eqref{pipeline parallel}.

\vspace{1mm}
\textbf{Example 3: Linear quadratic control.} The linear quadratic control (LQC) problem~\cite{khalil1996robust, anderson2007optimal} addresses optimal control of linear systems. The goal is to find a control policy based on observed information that minimizes the expected value of a quadratic cost function over a finite time horizon. The problem is given as follows:
\begin{subequations}
\label{eq:lqg}
\begin{align}
    \min_{\pi} &\quad \mathbb{E}\left[\sum_{t=0}^{T-1}\left(x_t^{\top} Q_x x_t+u_t^{\top} R_u u_t\right)+x_T^{\top} Q_{T} x_T\right], \\
    \text{s.t.} &\quad u_t = \pi_t(x_{0:t}; u_{0:t-1}), \quad \hspace{1.5mm} \forall t \in \{0,\ldots,T-1\}, \\
    &\quad x_{t+1}=A x_t+B u_t, \quad \quad \forall t \in \{0,\ldots,T-1\}.
\end{align}
\end{subequations}
Here, the state $x_t$ evolves according to the linear combination of previous state $x_{t-1}$ and control input $u_{t-1}$ with coefficient matrices $A$ and $B$. The policy $\pi_t$ generates control action $u_t$ based on the history of states $x_{0:t}$. Problem~\eqref{eq:lqg} seeks the optimal policy $\pi = (\pi_1, \ldots, \pi_{T-1})$ to minimize the total expected cost, determined by weighting matrices $Q_x$, $R_u$, and $Q_T$. By setting $\w = \pi = (\pi_1, \ldots, \pi_{T-1})$, $y_t = [u_t;x_{t+1}]$, and $f_t(y_{t-1}, \pi_t) = [\pi_t(x_{0:t}; u_{0:t-1});A x_t+B u_t ]$, problem~\eqref{eq:lqg} reduces to~\eqref{pipeline parallel}.

\subsection{Stochastic Gradient Descent}
Stochastic Gradient Descent (SGD) is a widely used optimization algorithm for solving the problem described in~\eqref{pipeline parallel}. Given the sequential nature of the operators in this formulation, each iteration of SGD involves both forward and backward passes to compute the stochastic gradient. Let \( u_i := \nabla_{w_i} \mathcal{F}(x; \w) \) and \( v_i := \nabla_{y_i} \mathcal{F}(x; \w) \) denote the stochastic gradients with respect to weight \( w_i \) and intermediate output \( y_i \), respectively, where \( v_N = 1 \) (since the output \( y_N \) is the final operator's result). We also use $r_i:=\nabla_{w_i}\mathcal{R}(\w)$ to denote the gradient of $\mathcal{R}(\w)$ with respect to $w_i$. The SGD procedure for optimizing~\eqref{pipeline parallel} is as follows:
\begin{subequations}
\label{eq:SGD-FB}
\begin{alignat}{3}
% \label{eq:sgd_forward_wo_error}
&\text{(Forward)}\quad  &\quad &y_i^{(t)} = f_i(y_{i-1}^{(t)}, w_i^{(t)}),& &\quad i = 1, 2, \dots, N, \label{eq:fw}\\
&\text{(Backward)}\quad & &v_{i-1}^{(t)} = \nabla_1 f_i (y_{i-1}^{(t)}, w_i^{(t)})^{\top} v_i^{(t)},& &\quad  i = N, N-1, \dots, 2, \label{eq:bw} \\
&\text{(Gradient)}\quad & &u_i^{(t)} = \nabla_2 f_i(y_{i-1}^{(t)}, w_i^{(t)})^{\top} v_i^{(t)},& &\quad  i = N, N-1, \dots, 1, \label{eq:wg}\\
&\text{(Update)}\quad & &w_i^{(t+1)} = w_i^{(t)} - \gamma (u_i^{(t)}+r_i^{(t)}),& &\quad  i = N, N-1, \dots, 1. \label{eq:update}
\end{alignat}
\end{subequations}
We refer to $y_i$, $v_i$, $u_i$, and $w_i$ as the \textit{intermediate output}, \textit{intermediate gradient}, \textit{weight gradient}, and \textit{weight}, respectively. Algorithm~\eqref{eq:SGD-FB} is structured into four key stages:
\begin{enumerate}[left=1.5em]
\vspace{1mm}
\item \textbf{Forward pass.} The input \( x \) is propagated through all \( N \) operators using \eqref{eq:fw}, where \( y_0^{(t)} = x \). This step sequentially evaluates all operators.

\vspace{1mm}
\item \textbf{Backward pass.} Gradient information is propagated in reverse order. Intermediate gradient \( v_i^{(t)} \) is computed at each operator \( i \) starting from \( i = N \) down to \( i = 1 \). Specifically, \( v_{i-1}^{(t)} \) is updated by applying the chain rule \eqref{eq:bw}, where \( \nabla_1 f_i \in \mathbb{R}^{d_i \times d_{i-1}} \) is the Jacobian of the operator \( f_i \) with respect to \( y_{i-1} \).

\vspace{1mm}
\item \textbf{Gradient.} The parameter gradients \( u_i^{(t)} \) are computed at each operator by combining $v_i^{(t)}$ with \( \nabla_2 f_i \in \mathbb{R}^{d_i \times d_{w_i}} \), the local Jacobians of the operators with respect to the weight \( w_i \), see the update in \eqref{eq:wg}.

\vspace{1mm}
\item \textbf{Weight update.} The weights \( w_i \) are updated with \eqref{eq:update}, where \( \gamma \) is the learning rate, and the update proceeds from \( i = N \) down to \( i = 1 \).
\end{enumerate}

\vspace{1mm}
\noindent This iterative procedure minimizes the composite objective function \( \mathcal{F}(x;\w) \) together with the regularizer \(\mathcal{R}(\w)\) by updating the operator-wise parameters \(\{w_i\}_{i=1}^N\) using gradient information with respect to both intermediate outputs and model weights. Note that \(\mathcal{R}(\w)\) depends only on \(\w\), so its gradient \(r_i=\nabla_{w_i}\mathcal{R}(\w)\) can be computed and added directly in the update step, without participating in the forward/backward passes for evaluating \(\mathcal{F}(x;\w)\).

\subsection{SGD with Perturbed Forward and Backward Passes} 
In practical implementations, gradient evaluation is susceptible to perturbations in both the forward and backward passes. To model these effects, we introduce forward and backward perturbation terms, ${\delta_i^{(t)}}$ and ${\varepsilon_i^{(t)}}$, for the \(i\)-th operator at iteration \(t\). We model perturbations only in the forward/backward evaluation of the sample-dependent composite loss \(\mathcal{F}(x;\w)\); the parameter-only regularizer \(\mathcal{R}(\w)\) is treated as exact and contributes deterministically via \(r_i=\nabla_{w_i}\mathcal{R}(\w)\). Incorporating these perturbations leads to an SGD update with perturbed forward and backward passes:
\begin{subequations}
\label{eq: sgd_forward_w_error}
\begin{alignat}{3}
&\text{(Forward)}&\quad &\tilde{y}_i^{(t)} = f_i(\tilde{y}_{i-1}^{(t)},w_i^{(t)})+{\delta_i^{(t)}},& &i=1,2,\cdots,N, \label{eq:forward-purt}\\
&\text{(Backward)}\quad& &\tilde{v}_{i-1}^{(t)} = \nabla_1 f_i (\tilde{y}_{i-1}^{(t)},w_i^{(t)})^{\top}\tilde{v}_i^{(t)}+{\varepsilon_i^{(t)}},&\quad & i=N,N-1,\cdots,2, \label{eq:back-purt}\\
&\text{(Gradient)}\quad& &\tilde{u}_{i}^{(t)}= \nabla_2 f_i(\tilde{y}_{i-1}^{(t)},w_i^{(t)})^{\top}\tilde{v}_i^{(t)},& &i=N,N-1,\cdots,1, \label{eq:gradient-purt} \\
&\text{(Update)}\quad & &w_i^{(t+1)} = w_i^{(t)} - \gamma (\tilde{u}_i^{(t)}+r_i^{(t)}),& &i = N, N-1, \dots, 1. \label{eq:update-purt}
\end{alignat}    
\end{subequations}
Here, $\tilde{y}_i$ and $\tilde{v}_i$ denote the corrupted intermediate output and intermediate gradient, induced by (random) perturbations $\delta_i$ and $\varepsilon_i$, respectively. Since the weight gradient $\tilde{u}_i$ depends on the corrupted intermediate gradient $\tilde{v}_i$ in~\eqref{eq:gradient-purt}, it yields a corrupted gradient $\tilde{u}_i$. Figure \ref{fig: perturbed_sgd} illustrates algorithm procedure listed in \eqref{eq: sgd_forward_w_error}.

\begin{figure}[t]
    \centering
\includegraphics[width=0.92\linewidth]{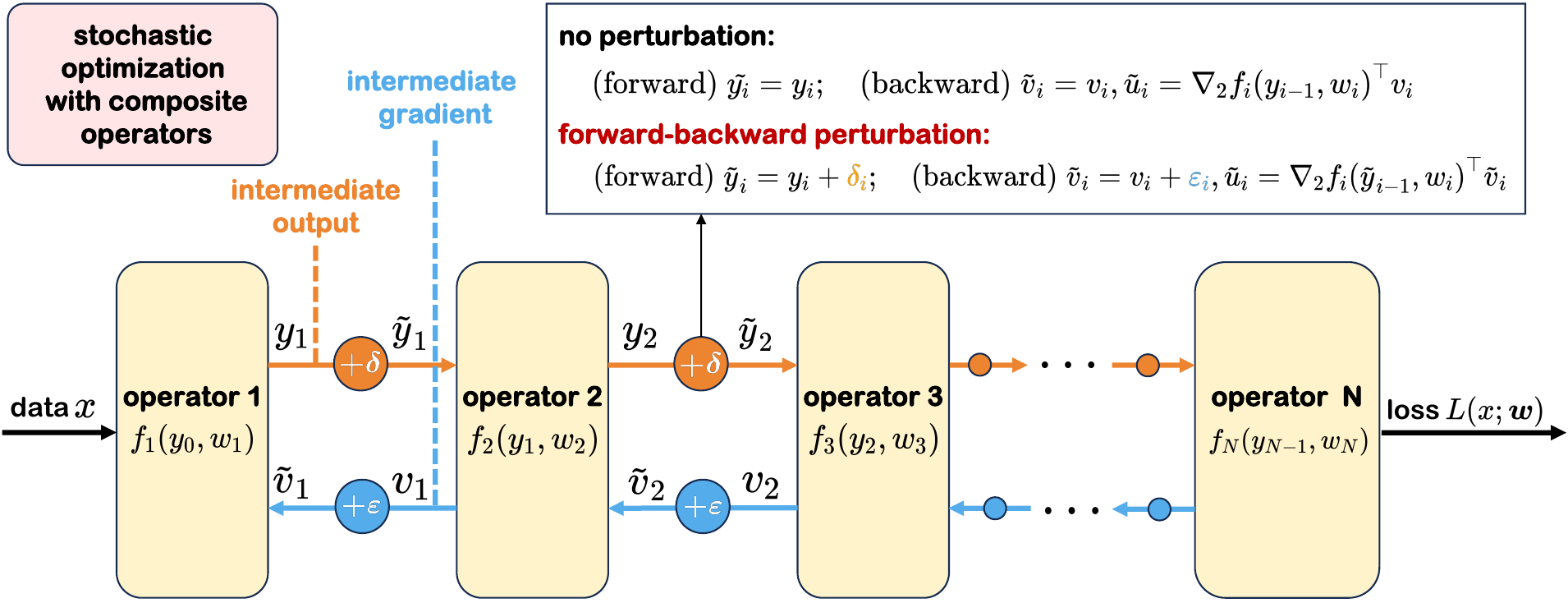}
    \caption{An illustration of SGD algorithm with perturbed forward and backward passes.}
    \label{fig: perturbed_sgd}
\end{figure}

The perturbed SGD framework~\eqref{eq: sgd_forward_w_error} naturally emerges in numerous practical scenarios. For instance, mixed-precision training~\cite{micikevicius2017mixed,bondarenko2021understanding,micikevicius2022fp8,fishman2024scaling,chen2024int,tailor2020degree,chen2024efficientqat}, widely adopted for DNN, combines low-precision computations (FP16/INT8) with FP32 model weights to reduce memory consumption and accelerate training on specialized hardware. This precision reduction inherently introduces systematic perturbations: forward passes in lower precision generate rounding errors ($\delta_i$), while reduced-precision intermediate gradient computations yield backward perturbations ($\varepsilon_i$). Similarly, pipeline-parallel DNN training partitions neural network layers across multiple devices, requiring communication of intermediate outputs and gradients. To mitigate communication overhead, these variables undergo compression before transmission~\cite{evans2020jpeg,fu2020don,wang2022fine,kong2025clapping}, inducing forward and backward perturbations at operator boundaries; see Appendix~\ref{section: A3} for a simple two-layer example where a deterministic Top-1 compressor prevents convergence even though the uncompressed method converges. Additionally, hardware-induced errors—such as bit flips from cosmic rays or voltage instability~\cite{rakin2019bit,guerrero2022reliability,dong2023one}—introduce unpredictable computational perturbations during forward and backward passes.
Furthermore, in the LQC problem~\eqref{eq:lqg}, the state \( x_t \)  may be affected by random Gaussian noise, e.g., \( x_{t+1} = A x_t + B u_t + \delta_t \), where \( \delta_t \sim \mathcal{N}(0, \Sigma_\delta) \). These diverse sources of perturbations demonstrate that SGD with perturbed forward and backward passes captures the fundamental dynamics of modern optimization, where exact gradients may not always be feasible.

\subsection{Open Questions, Theoretical Challenges, and Contributions} 

\subsubsection{Open questions} Despite the widespread use of SGD with perturbed forward and backward passes in modern optimization and machine learning, our theoretical understanding remains limited. Most existing literature focuses exclusively on noise or perturbations to weight gradients $u_i$ in~\eqref{eq:gradient-purt}, where errors remain localized—they neither propagate forward through subsequent operators nor backward through intermediate gradients, affecting only the model weight update. In contrast, perturbations to intermediate outputs $y_i$ in~\eqref{eq:forward-purt} and intermediate gradients $v_i$ in~\eqref{eq:back-purt} cascade through the computational graph, accumulating across operators to create compounded errors that fundamentally complicate convergence analysis. This paper investigates the following three fundamental open questions: 
\begin{itemize}[left=1.5em]
\vspace{2mm}
\item[\textbf{Q1}.] \textbf{(Error dynamics)} How can the propagation and accumulation of compounded errors through the forward and backward passes be theoretically characterized? 

\vspace{1mm}
\item[\textbf{Q2.}] \textbf{(Convergence)} Can SGD converge to optimal solutions under perturbed forward and backward passes, and how do these perturbations quantitatively impact the convergence behavior?

\vspace{1mm}
\item[\textbf{Q3.}] \textbf{(Robustness)} Under what conditions do perturbations not affect convergence rates, and what  properties of perturbations contribute to this robustness?
\end{itemize}

\subsubsection{Theoretical challenges} 
The main challenge arises from compounded error propagation through computational graphs. Standard SGD analyses~\cite{bottou2018optimization} treat gradient errors as additive noise that is independent of computation—modeling perturbed gradients as $\tilde{u} = \nabla f(w) + \varepsilon$ with exogenous $\varepsilon$. In contrast, framework~\eqref{eq: sgd_forward_w_error} generates perturbations endogenously that cascade through operators, creating complex inter-operator dependencies. Consider a linear neural network with $f_i(y_{i-1},W_i)=W_iy_{i-1}$, zero forward perturbations ($\delta_i^{(t)}=0$), and constant backward perturbations ($\varepsilon_i^{(t)}=\varepsilon_i$). The weight gradient $\tilde{u}_1$ under both models reveals the distinction:
\begin{subequations}
\label{error propagation}
\begin{alignat}{2}
\label{error propagation_standard}
&\text{(Add. perturbation)}& &\tilde{u}_1 =  W_2^\top \cdots W_N^\top v_N \cdot v_1^\top+\varepsilon,\\
\label{error propagation_F and B}
&\text{(Acc. perturbation)}&\quad &\tilde{u}_1 =  \left( W_2^\top \left( \cdots \left( W_N^\top v_N + \varepsilon_N \right) + \cdots \right) + \varepsilon_2 \right)v_1^\top+ \varepsilon_1. 
\end{alignat}    
\end{subequations}
Here, $\tilde{u}_1$ denotes the perturbed gradient with respect to $W_1$, while ``Add.'' and ``Acc.'' denote additive and accumulative perturbations, respectively. The perturbed formulation~\eqref{error propagation_F and B} exhibits three pathological behaviors absent from standard SGD analyses with additive noise: (1) \textbf{Geometric error amplification.} The deepest perturbation $\varepsilon_N$ is multiplied by the matrix product $W_1^\top \cdots W_{N-1}^\top$, potentially causing exponential growth with network depth—a phenomenon that can trigger convergence failure. (2) \textbf{Cascading error interference.} The nested structure creates nonlinear interactions between operator-wise perturbations, where errors compound in ways that simple noise models cannot capture.
(3) \textbf{Biased gradient estimates.} Even when perturbations $\delta_i^{(t)}$ and $\varepsilon_i^{(t)}$ have zero expectation, the nonlinear operators in~\eqref{eq: sgd_forward_w_error} induce bias such that $\mathbb{E}[\nabla_2 f_i (\tilde{y}_{i-1}^{(t)},w_i^{(t)})] \neq \nabla_2 f_i ({y}_{i-1}^{(t)},w_i^{(t)})$.
These effects critically challenge  analysis, requiring new theoretical frameworks beyond classical SGD theory.

\subsubsection{Contributions} 
This paper provides the first comprehensive analysis for SGD under perturbed forward and backward passes. Our contributions are:
\begin{itemize}[left=1.5em]
\vspace{2mm}
\item[\textbf{C1.}] We theoretically characterize the propagation of forward and backward perturbations within a single gradient step, demonstrating that these perturbations amplify exponentially with the number of operators, which addresses {Q1.} 

\vspace{1mm}
\item[\textbf{C2.}] We establish convergence guarantees for perturbed SGD  on both general non-convex functions and those satisfying the Polyak–Łojasiewicz (PL) condition. By explicitly quantifying the impact of forward and backward errors on the convergence rate, we address open question {Q2}. 

\vspace{1mm}
\item[\textbf{C3.}] We identify the specific conditions under which forward and backward perturbations do not deteriorate the asymptotic order of the convergence rate, directly addressing open question Q3. 
\end{itemize}

\vspace{1mm}
\noindent As a side result, our analysis provides an explanation for the ``gradient spiking'' phenomenon commonly observed in deep learning. These spikes are sudden, temporary jumps in the gradient, often triggered by perturbations during the forward and backward passes. While some spikes are recoverable, meaning they can be corrected during the training process, others can lead to training divergence, see Figure~\ref{fig: spike_phenomenon} for the illustration. Our theoretical analysis of the perturbed SGD~\eqref{eq: sgd_forward_w_error} clearly establishes the conditions under which the optimization can recover or experience a complete crash.

\begin{figure}
	\centering
\includegraphics[width=0.92\linewidth]{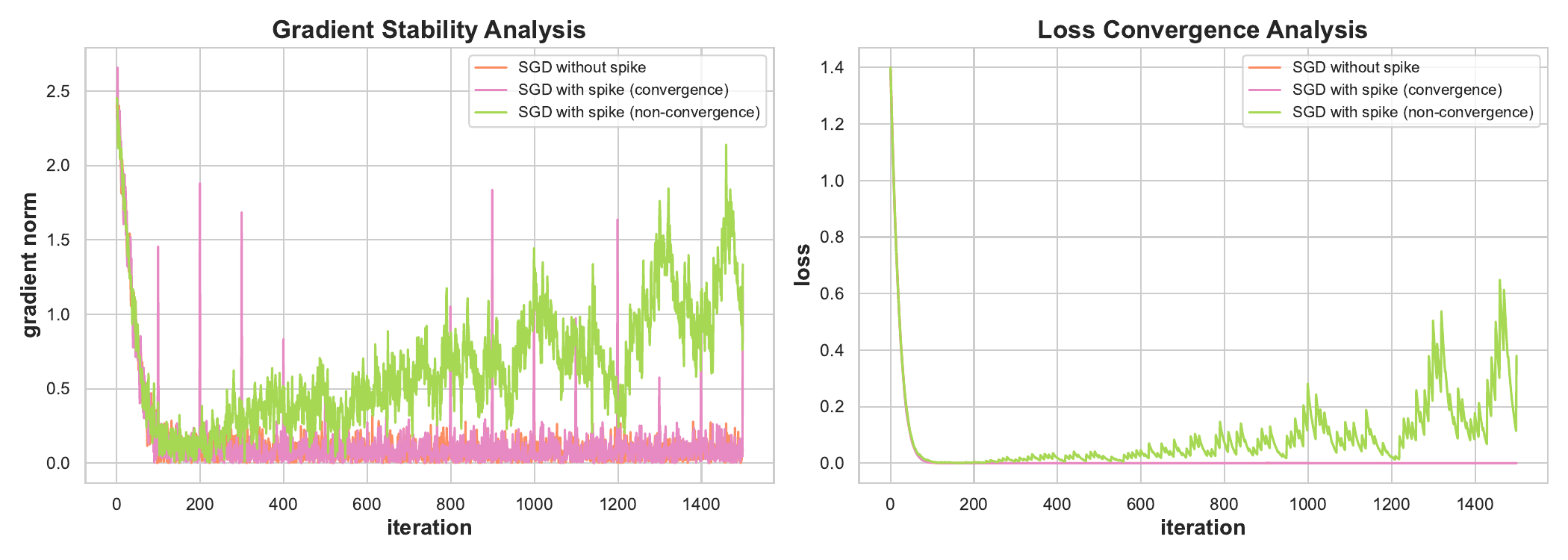}
	\caption{Comparison of gradient spike patterns leading to contrasting outcomes. The gradient norm (left) and loss (right) trajectories illustrate two scenarios: a large-magnitude spike (orange) that permits rapid recovery and convergence, versus a more moderate spike (blue) that triggers persistent deviation and eventual non-convergence.}
	\label{fig: spike_phenomenon}
\end{figure}

\vspace{1mm}
\textbf{Organization.} This paper is organized as follows: Section \ref{section: Related works} presents  related work. Section \ref{section: Preliminaries} presents preliminaries, including notations and assumptions. Section \ref{section: Error propagation analysis} analyzes the propagation of forward and backward computation errors. Section \ref{section: Convergence rate with computation error} derives the convergence rate of the SGD algorithm with computation error under non-smooth settings and PL conditions. Section \ref{section: Recover from computation error and gradient spike} establishes conditions on computation error that guarantee convergence and enable recovery from gradient spikes. Section \ref{section: Experiments} presents numerical experiments validating our theoretical findings. Finally, Section \ref{section: Conclusion} concludes the paper.

\section{Related Works}
\label{section: Related works}
Our analysis builds upon and extends several lines of theoretical work on optimization algorithms subject to computational inaccuracies.

\textbf{Algorithms with inexact gradient oracles.} The effect of computational inaccuracy on gradient methods has been studied extensively under inexact first-order oracle models, in which the algorithm accesses a perturbed gradient at each iteration. Classical results analyze gradient descent with deterministic errors (e.g., finite differences in zeroth-order methods, early termination of inner solvers, discretization errors in PDE-constrained optimization, and surrogate model approximations) and derive convergence and complexity guarantees that degrade gracefully with the error level~\cite{bertsekas2000gradient,devolder2014first,d2008smooth}. In stochastic settings, the gradient is typically corrupted by additive sampling noise from random data sampling; related analyses quantify the convergence performance typically under conditional moment or bounded-variance assumptions \cite{ghadimi2013stochastic,shamir2013stochastic,bottou2018optimization}. A parallel line of work examines inexact proximal point and proximal-gradient methods, in which proximal subproblems are solved approximately while still ensuring descent and convergence \cite{humes2005inexact,burachik2010inexact,schmidt2011convergence,hamadouche2021approximate}. More recent developments refine oracle models for composite structures and higher-order inexactness, and establish non-asymptotic bounds that explicitly track oracle accuracy \cite{sun2020composite,alkousa2024higher,nabou2025proximal}.

Many pre-training approaches for large language models can be viewed as first-order methods with inexact gradients due to communication or computation errors. Examples include gradient quantization and sparsification \cite{alistarh2017qsgd,richtarik2021ef21,fatkhullin2023momentum}, mixed-precision training \cite{micikevicius2017mixed,micikevicius2022fp8,liu2024deepseek}, and gradient clipping~\cite{koloskova2023revisiting,qian2021understanding}. The analysis typically treats the gradient as a perturbation of the ideal stochastic gradient and relates convergence to error statistics (bias, variance, or relative bounds). This perspective is particularly powerful when the perturbation can be modeled as an \emph{additive error} to the computed gradient, as illustrated in \eqref{error propagation_standard}, enabling standard descent and martingale analyses.

The perturbation mechanism in \eqref{eq: sgd_forward_w_error} differs fundamentally from standard inexact-oracle models. Perturbations are injected \emph{within} the forward–backward pass at intermediate activations and adjoints, propagating through the computational graph before the gradient forms. The resulting error is endogenous, shaped by local Jacobians at the current iterate, creating inter-operator dependencies. Moreover, even zero-mean perturbations can induce gradient bias due to the composite structure; see Appendix~\ref{section: example_zero-mean noise} for a simple scalar example. These considerations motivate our analysis of SGD with perturbed forward–backward passes.

\vspace{1mm}
\textbf{Stochastic compositional optimization (SCO).}
Stochastic compositional optimization \cite{wang2017accelerating,wang2017stochastic,chen2021solving} studies problems with nested expectation structure, such as
\begin{equation}
\min_{w\in\mathbb{R}^d} F(w):= \mathbb{E}_{\zeta}\bigl[f_{\zeta}\bigl( G(w) \bigr)\bigr],\qquad G(w) := \mathbb{E}_{\xi}[g_{\xi}(w)],
\label{eq:sco-two-level-shortest}
\end{equation}
and their multi-level generalizations \cite{yang2019multilevel,balasubramanian2022stochastic,zhangxiao2021nestedvr,zhanglan2021optimal,jiang2022optimal,jiang2025revisiting} (see also constrained or projection-free variants \cite{xiao2022projectionfree,jiang2024pmvr}). A key difficulty is that computing $\nabla F(w)$ requires evaluating the derivative at $G(w)$, an inner expectation that is not observable from a single sample $\xi$. Practical methods thus maintain auxiliary variables (typically one per level) to track the nested expectations and form quasi-gradients by combining component derivative estimates with these tracked quantities. Since the outer derivative employs an evolving approximation instead of the exact inner expectations, the descent directions become biased. Consequently, the analysis focuses on controlling the coupled iterate-tracking dynamics through multi-timescale step sizes or single-loop tracking/variance-reduction designs.

Our setting differs from SCO in both formulation and error mechanism. At a high level, both SCO and our framework involve compositional graphs where intermediate quantities are accessed inexactly, leading to biased gradient surrogates. However, the source and nature of inexactness are fundamentally different. In SCO, inexactness arises from \emph{estimating inner expectations} within each operator; this bias can be reduced algorithmically by allocating more computational effort to inner estimation or tracking---for example, using larger inner batches or more accurate variance-reduced estimators. In contrast, our setting~\eqref{eq: sgd_forward_w_error} involves a single expectation, where inexactness stems from perturbations \emph{within} the forward and backward passes themselves. Crucially, increasing batch size reduces sampling variance but \emph{does not} diminish these within-pass perturbations, which persist independently of sample size. Moreover, perturbations in our setting are {depth-coupled}: they propagate and accumulate along sample paths through the computational graph. This central challenge---depth-wise propagation and accumulation of intermediate disturbances---lies beyond the scope of existing SCO analyses \cite{wang2017accelerating,wang2017stochastic,chen2021solving,yang2019multilevel,balasubramanian2022stochastic}.

\vspace{1mm}
\textbf{AQ-SGD. }The most closely related work is AQ-SGD, which analyzes activation quantization for distributed pipeline training \cite{wang2022fine}. Critically, AQ-SGD is tailored to pipeline parallelism with communication-induced perturbations and does not address general forward-backward perturbation mechanisms or provide the depth-wise error propagation and disturbance-frequency analysis developed here.

\section{Preliminaries} 
\label{section: Preliminaries}
This section introduces necessary notations and assumptions.
\subsection{Notations} 
We denote the gradient of $\ell(\w)$ by $\nabla \ell(\w)\in \mathbb{R}^d$. For $i = 1, \ldots, N$, the partial derivative of $\ell(\w)$ with respect to parameter $w_i$ is denoted by $\nabla_i \ell(\w):= \frac{\partial \ell(\w)}{\partial w_i} \in \mathbb{R}^{d_{w_i}}$. For each operator $f_i(y_{i-1},w_i): \mathbb{R}^{d_{i-1}} \times \mathbb{R}^{d_{w_i}} \to \mathbb{R}^{d_i}$, the Jacobian matrices with respect to the input $y_{i-1}$ and parameter $w_i$ are denoted by $\nabla_{1}f_i(y_{i-1}, w_i) \in \mathbb{R}^{d_i \times d_{i-1}}$ and $\nabla_{2}f_i(y_{i-1}, w_i) \in \mathbb{R}^{d_i \times d_{w_i}}$, respectively. We also denote $\nabla^2_{21}f_i(y_{i-1}, w_i)\in \mathbb{R}^{d_i \times d_{w_i}\times d_{i-1}}$ as the partial derivative of $\nabla_{2}f_i(y_{i-1}, w_i)$ with respect to $y_{i-1}$. We use $\|\cdot\|$ to denote the Euclidean norm for vectors and Frobenius norm for matrices and tensors, and $\|\cdot\|_{\text{op}}$ to denote the induced operator norm. Throughout the paper, superscripts $t$ indicate the iteration index, while subscripts $i$ denote the component index. For any family of component-wise vectors $\{a_i\}_{i=1}^N$ (e.g., $u_i$, $v_i$, $r_i$ and their perturbed counterparts), we use boldface $\boldsymbol{a}:=(a_1^\top,\ldots,a_N^\top)^\top$ to denote the stacked/concatenated vector across components
(and similarly $\tilde{\mathbf{a}}$ for the perturbed version). We define the minimum objective value as $\ell^*:=\min_{\w}\ell(\w)$ and the initial optimality gap as $\Delta_0:=\ell(\w^{(0)})-\ell^*$. The notation $a \lesssim b$ means $a \leq Cb$ for some constant $C > 0$ independent of problem parameters. Notation $\tilde{O}(\cdot)$ hides logarithmic dependence on $T$.

For the stochastic setting, let $\mathcal{F}^{(t)}$ denote the filtration generated up to iteration $t$, with $\mathbb{E}_t[\cdot]$ denoting the conditional expectation given $\mathcal{F}^{(t)}$. Furthermore, let $\mathcal{G}_i^{(t)} = \sigma(\mathcal{F}^{(t)} \cup \{\tilde{y}_1^{(t)}, \ldots, \tilde{y}_i^{(t)}\})$ denote the filtration generated by perturbed outputs up to operator $i$ in the forward pass, and let $\mathcal{H}_i^{(t)} = \sigma(\mathcal{G}_N^{(t)} \cup \{\tilde{v}_N^{(t)}, \ldots, \tilde{v}_i^{(t)}\})$ denote the filtration generated by the full forward pass and backward gradients from operator $N$ down to operator $i$. We use $\mathbb{E}^{\mathcal{G}_i}_t[\cdot]$ and $\mathbb{E}^{\mathcal{H}_i}_t[\cdot]$ to denote conditional expectations with respect to $\mathcal{G}_i^{(t)}$ and $\mathcal{H}_i^{(t)}$, respectively.

\subsection{Third-Order Tensor and Its Operator Norm}
We define multiplication between third-order tensors and matrices, along with the induced operator norm.

Let $\mathcal{H} := (\mathcal{H}_{j\alpha\beta}) \in \mathbb{R}^{p \times m \times n}$ be a third-order tensor and $A := (A_{\alpha\beta}) \in \mathbb{R}^{m \times n}$ be a matrix. The product $\mathcal{H}(A) \in \mathbb{R}^p$ is given by contracting over the last two indices:

\begin{equation}
  [\mathcal{H}(A)]_j
  :=
  \sum_{\alpha=1}^m \sum_{\beta=1}^n
    \mathcal{H}_{j\alpha\beta} A_{\alpha\beta},
  \qquad j = 1, \dots, p.
  \label{eq:tensor-matrix-product}
\end{equation}
That is, for each $j$, the component $[\mathcal{H}(A)]_j$ equals $\sum_{\alpha,\beta} \mathcal{H}_{j\alpha\beta} A_{\alpha\beta}$. From \eqref{eq:tensor-matrix-product}, we define the operator norm of $\mathcal{H}$ as
\begin{equation}
  \|\mathcal{H}\|_{\mathrm{op}}
  :=
  \sup_{A \in \mathbb{R}^{m \times n}, \, A \neq 0}
    \frac{\|\mathcal{H}(A)\|_2}{\|A\|}
  =
  \sup_{\|A\| = 1} \|\mathcal{H}(A)\|_2.
  \label{eq:tensor-op-norm}
\end{equation}
Intuitively, $\|\mathcal{H}\|_{\mathrm{op}}$ measures the largest amplification of the output norm $\|\mathcal{H}(A)\|_2$ when the input matrix $A$ has unit Frobenius norm. The Frobenius norm of $\mathcal{H}$ can be expressed as
\begin{equation}
\|\mathcal{H}\|:=\Big(\sum_{j,\alpha,\beta}\mathcal{H}_{j,\alpha,\beta}^2\Big)^{\frac{1}{2}}.
  \label{eq:tensor-f-norm}
\end{equation}

\subsection{Assumptions} 
The following two assumptions are standard in the convergence analysis of stochastic gradient algorithms.

%------------------------------------------------
% A1
%------------------------------------------------

\begin{assumption}[\sc Smoothness of the loss function]
\label{assumption:smoothness}
    There exists a constant $L_{\nabla \ell} > 0$ such that $\nabla \ell(\w)$ is $L_{\nabla \ell}$-Lipschitz continuous.
\end{assumption}

\begin{assumption}[\sc Stochasticity]
\label{assumption:unbiased}
    There exists a constant $\sigma>0$ such that for any given $x\in\mathbb{R}^{d_{0}}$, the unperturbed gradient oracles satisfy:
    \begin{align*}
        \mathbb{E}_{x\in\mathcal{D}}[\nabla_{\w} \mathcal{L}(x;\w)] = \nabla\ell(\w);\quad \mathbb{E}_{x\in\mathcal{D}}\left[\left\Vert\nabla_{\w} \mathcal{L}(x;\w)-\nabla\ell(\w)\right\Vert^2\right] \leq\sigma^2.
    \end{align*}
\end{assumption}

%------------------------------------------------
% A2
%------------------------------------------------
We also make the following assumption on the smoothness of each operator.
\begin{assumption}[\sc Operator smoothness]
\label{assumption:smoothness of the components}
For each $i=1,\ldots,N$, there exist constants $C_{\nabla f},C_{\nabla^2f},L_f,L_{\nabla f},L_{\nabla^2f}>0$ such that for all $y_{i-1}\in\mathbb{R}^{d_{i-1}}$ and $w_i\in\mathbb{R}^{d_{w_i}}$:
\begin{enumerate}[left=1.5em]
\vspace{1mm}
    \item The Jacobians satisfy
    $$\|\nabla_j f_i(y_{i-1},w_i)\|_{\text{op}}\leq C_{\nabla f}\quad(j=1,2), \qquad\|\nabla^2_{21}f_i(y_{i-1},w_i)\|_{\text{op}}\leq C_{\nabla^2f}.$$
    \item The operator $f_i$ is Lipschitz continuous with respect to $y_{i-1}$ with constant $L_f$. Here, Lipschitz continuity of $f_i$ is measured in the Euclidean norm.
    \item The mappings $\nabla f_{i}$, and $\nabla^2 f_{i}$ are Lipschitz continuous with respect to both $y_{i-1}$ and $w_i$, with constants $L_{\nabla f}$, and $L_{\nabla^2 f}$, respectively. Here, Lipschitz continuity of $\nabla f_{i}$ and $\nabla^2 f_{i}$ is measured in the Frobenius norm.
\end{enumerate}
\end{assumption}

Assumption~\ref{assumption:smoothness of the components} is an operator-regularity condition that controls how perturbations amplify through the computational graph during the forward and backward passes. Bounded-Jacobian and Lipschitz-type conditions are standard in analyses involving intermediate perturbations; see, e.g.,~\cite{chen2021actnn,wang2022fine,kong2025clapping,liu2022gact,castiglia2022compressed}.\footnote{When the component mappings are twice differentiable, the Lipschitz requirements on the derivative maps in Assumption~\ref{assumption:smoothness of the components} can be interpreted as bounded variation of the local Jacobians, or equivalently, bounded mixed second-order derivatives.} 

We emphasize that Assumption~\ref{assumption:smoothness of the components} is \emph{not} an algorithmic constraint: throughout the paper we analyze the plain SGD update in~\eqref{eq:update-purt} without projection or clipping. That said, Assumption~\ref{assumption:smoothness of the components} can be interpreted \emph{locally}: it suffices for the stated regularity bounds to hold on a compact region containing the iterates and intermediate states visited during training. Such compactness can arise, for example, from finite data combined with bounded parameter regimes, explicit projection to a compact set, or standard clipping/normalization practices. Our goal is to isolate the effect of forward/backward perturbations; incorporating such safeguards would only strengthen stability in practice.
Finally, we make the following assumption of the Polyak-Łojasiewicz (PL) condition \cite{polyak1963gradient} for $\ell(\w)$:

%------------------------------------------------
% APL
%------------------------------------------------
\begin{assumption}[PL condition]
\label{assumption:pl}
    There exist a constant $\mu>0$ such that for any $\w\in\mathbb{R}^d$, it holds that
    \begin{align}
\left\Vert\nabla\ell(\w)\right\Vert^2\geq2\mu(\ell(\w)-\ell^*).
    \end{align}
\end{assumption}

The PL condition can be implied by strong convexity. In particular, adding an $\ell_2$ regularizer $\mathcal{R}(\w)=\frac{\lambda}{2}\|\w\|^2$ makes the loss $\ell(\w)$ $\lambda$-strongly convex whenever the data-fitting term is convex, which in turn ensures the PL inequality holds with $\mu=\lambda$.

\vspace{3mm}

\section{Error Propagation Analysis}
\label{section: Error propagation analysis}
This section characterizes the propagation of forward and backward perturbations within a single gradient step. We begin with the following useful lemma.

\begin{lemma}
\label{lemma:boundness_v}
Under Assumption~\ref{assumption:smoothness of the components}, there exists a constant $C_v^2 \geq 1$ such that $\Vert v_i^{(t)}\Vert^2 \leq C_v^2$ for all $t = 1, 2, \ldots, T$ and $i = 1, 2, \ldots, N$.
\end{lemma}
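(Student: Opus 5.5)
The plan is a backward induction along the unperturbed backward recursion \eqref{eq:bw}, using only the uniform Jacobian bound in Assumption~\ref{assumption:smoothness of the components}. The base case is the last operator. By definition $v_N^{(t)} = \nabla_{y_N}\mathcal{F}(x;\w^{(t)}) = 1$, since $y_N = \mathcal{F}$ is scalar-valued, so $\|v_N^{(t)}\| = 1$ for every $t$ and every sample $x$.

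For the inductive step, I would use \eqref{eq:bw}, namely $v_{i-1}^{(t)} = \nabla_1 f_i(y_{i-1}^{(t)},w_i^{(t)})^\top v_i^{(t)}$. The operator norm of a matrix equals that of its transpose, so
\begin{equation*}
\|v_{i-1}^{(t)}\| \le \|\nabla_1 f_i(y_{i-1}^{(t)},w_i^{(t)})\|_{\text{op}}\,\|v_i^{(t)}\| \le C_{\nabla f}\,\|v_i^{(t)}\|.
\end{equation*}
Iterating from $i=N$ downward gives $\|v_i^{(t)}\| \le C_{\nabla f}^{\,N-i}$ for all $i$.

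Now define $C_v^2 := \max\{1, C_{\nabla f}^{2(N-1)}\}$. This covers both regimes. If $C_{\nabla f}\ge 1$, the largest bound is attained at $i=1$. If $C_{\nabla f}<1$, the largest bound is $1$, attained at $i=N$. The maximum with $1$ also gives the required property $C_v^2\ge 1$. The constant depends only on $N$ and $C_{\nabla f}$, not on $t$, $x$, or the iterate $\w^{(t)}$.

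The main point needing care is why the bound is uniform in $t$ even though no boundedness of the iterates $\w^{(t)}$ is assumed. This works because Assumption~\ref{assumption:smoothness of the components} bounds the Jacobians for \emph{all} $(y_{i-1},w_i)$, so no control of the trajectory is needed. Under the local interpretation discussed after the assumption, the same argument applies as long as the iterates stay in the region where the bound holds.
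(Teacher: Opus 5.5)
Your proof is correct and matches the paper's argument: both start from $v_N^{(t)}=1$ and bound $\|v_i^{(t)}\|$ by the product of the Jacobian operator norms $\|\nabla_1 f_k\|_{\text{op}}\le C_{\nabla f}$ for $k=i+1,\dots,N$. Your step-by-step induction applies the factors in the correct order (the paper's displayed product has them reversed), and your constant $\max\{1,C_{\nabla f}^{2(N-1)}\}$ is slightly tighter than the paper's $\max\{1,C_{\nabla f}^{2N}\}$, which is harmless.
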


\begin{proof}
For $i = 1, 2, \ldots, N-1$, the vector $v_i^{(t)}$ can be expressed as
\begin{align}
    v_i^{(t)} &= \left(\nabla_1 f_{N}(y_{N-1}^{(t)}, w_{N}^{(t)})\right)^{\top} \cdots \left(\nabla_1 f_{i+2}(y_{i+1}^{(t)}, w_{i+2}^{(t)})\right)^{\top} \cdot \left(\nabla_1 f_{i+1}(y_i^{(t)}, w_{i+1}^{(t)})\right)^{\top}. \nonumber 
\end{align}
This yields the upper bound
\begin{equation}
\begin{aligned}
    \left\Vert v_i^{(t)}\right\Vert^2 &\leq \prod_{k=i+1}^N \left\Vert \nabla_1 f_{k}(y_{k-1}^{(t)}, w_{k}^{(t)})\right\Vert_{\text{op}}^2 \leq C_{\nabla f}^{2(N-i)},
\end{aligned}
\end{equation}
where the first inequality holds because $\nabla_1 f_{N}(y_{N-1}^{(t)}, w_{N}^{(t)})$ has dimension $d_{N-1}$, so its operator norm equals its Euclidean norm. Since $N$ is a fixed constant and $v_N^{(t)} = 1$, taking $C_v = \max\{1, C_{\nabla f}^{N}\}$ ensures that $\Vert v_i^{(t)}\Vert^2 \leq C_v^2$ for all $t = 1, 2, \ldots, T$ and $i = 1, 2, \ldots, N$.
\end{proof}

Lemma~\ref{lemma:boundness_v} ensures that the intermediate backward gradients $v_i^{(t)}$ remain uniformly bounded across all operators and iterations. This controls how perturbations introduced during the backward pass are amplified as they propagate toward earlier operators. The bound $C_v = \max\{1, C_{\nabla f}^N\}$ reflects worst-case accumulation of Jacobian norms across the network depth.
The following theorem presents an upper bound of the $\ell_2$-error term:
\begin{theorem}\label{thm-utilde-u-0}
    Suppose Assumption \ref{assumption:smoothness of the components} holds, then for $i=1,2,\cdots,N-1$, there exist constants $C^{e}_{\delta_i},C^{e}_{\varepsilon_{i+1}}\geq0$, which are defined in \eqref{expr-C-delta-epsilon}, such that:
    \begin{align}
    \label{error_variance}
    \mathbb{E}\big[ \| \tilde{\u}^{(t)}-{\u}^{(t)} \|^2 \big]
    \leq &\sum_{i=1}^{N-1}C^{e}_{\delta_i}\mathbb{E}\big[  \| \delta_{i}^{(t)}  \|^2 \big]+\sum_{i=1}^{N-1}C^{e}_{\varepsilon_{i+1}}\mathbb{E}\big[  \| \varepsilon_{i+1}^{(t)}  \|^2 \big],   
    \end{align}
    holds for all $t=1,2,\cdots,T$.
\begin{proof}
We first consider the gradient evaluation at the last component. We have
\begin{equation}
\label{estimation_u_1}
\begin{aligned}
    &\mathbb{E}\left[ \left\| \tilde{u}_N^{(t)}-{u}_N^{(t)} \right\|^2 \right]
    = \mathbb{E}\left[ \left\| \nabla_{2}f_N(\tilde{y}_{N-1}^{(t)},w_N^{(t)})  - \nabla_{2}f_N(y_{N-1}^{(t)},w_{N}^{(t)}) \right\|^2 \right] \\
    \leq &L_{\nabla f}^2 \mathbb{E}\left[  \left\| \tilde{y}_{N-1}^{(t)}-y_{N-1}^{(t)} \right\|^2 \right]\leq 2C_v^2L_{\nabla^2 f}^2 \mathbb{E}\left[  \left\| \tilde{y}_{N-1}^{(t)}-y_{N-1}^{(t)} \right\|^2 \right],   
\end{aligned}
\end{equation}
where the inequality follows from the smoothness of $\nabla_2 f_N$ and the fact that $C_v^2 \geq 1$.
For $i = 1, 2, \ldots, N-1$, we have
\begin{equation}
\label{estimation_u_2}
\begin{aligned}
&\mathbb{E}\left[ \left\| \tilde{u}_i^{(t)}-{u}_i^{(t)} \right\|^2 \right] = \mathbb{E}\left[ \left\| \nabla_{2} f_i (\tilde{y}_{i-1}^{(t)},w_i^{(t)})^{\top} \tilde{v}_i^{(t)}-\nabla_{2} f_i (y_{i-1}^{(t)},w_i^{(t)})^{\top}v_i^{(t)} \right\|^2 \right] \\
\leq &2\mathbb{E}\left[  \left\| \nabla_{2} f_i (\tilde{y}_{i-1}^{(t)},w_i^{(t)})\right\|_{\text{op}}^2\left\|\tilde{v}_i^{(t)}-v_i^{(t)} \right\|^2 \right]\\
&+2\mathbb{E}\left[  \left\| \nabla_{2} f_i (\tilde{y}_{i-1}^{(t)},w_i^{(t)}) -\nabla_{2} f_i (y_{i-1}^{(t)},w_i^{(t)})\right\|_{\text{op}}^2\left\|v_i^{(t)}  \right\|^2 \right]\\
\leq &2C_{\nabla f}^2 \mathbb{E}\left[  \left\| \tilde{v}_i^{(t)}-v_i^{(t)} \right\|^2 \right]+2C_{v}^2L_{\nabla f}^2\mathbb{E}\left[  \left\| \tilde{y}_{i-1}^{(t)}-y_{i-1}^{(t)}  \right\|^2 \right],  
\end{aligned}
\end{equation}
where the second inequality follows from the smoothness of $\nabla_2 f_i$ together with the boundedness of $v_i^{(t)}$ and $\nabla_2 f_i$. Summing over $i = 1, 2, \ldots, N$ yields
\begin{equation}
\label{estimation_u_3}
\begin{aligned}
&\mathbb{E}\left[ \left\| \tilde{\u}^{(t)}-{\u}^{(t)} \right\|^2 \right]=\sum_{i=1}^N\mathbb{E}\left[ \left\| \tilde{u}_i^{(t)}-{u}_i^{(t)} \right\|^2 \right]\\
\leq &2C_{\nabla f}^2\sum_{i=1}^{N-1}\mathbb{E}\left[  \left\| \tilde{v}_i^{(t)}-v_i^{(t)} \right\|^2 \right]+2C_{v}^2L_{\nabla f}^2\sum_{i=1}^{N-1}\mathbb{E}\left[  \left\| \tilde{y}_{i}^{(t)}-y_{i}^{(t)}  \right\|^2 \right].    
\end{aligned}
\end{equation}
In view of~\eqref{estimation_u_3}, we next analyze the error between $\tilde{v}_i^{(t)}$ and $v_i^{(t)}$ for $i = 1, 2, \ldots, N-1$. We obtain
\begin{equation}
\label{estimation_v_1}
\begin{aligned}
&\mathbb{E}\hspace{-0.5mm}\left[\hspace{-0.5mm}\left\| \tilde{v}_i^{(t)}-{v}_i^{(t)} \right\|^2\hspace{-0.5mm}\right]
\hspace{-1mm}=\hspace{-1mm}\mathbb{E}\hspace{-0.5mm}\left[\hspace{-0.5mm}\left\| \nabla_{1} f_{i+1} (\tilde{y}_{i}^{(t)},w_{i+1}^{(t)})^{\top} \tilde{v}_{i+1}^{(t)}\hspace{-0.5mm}-\hspace{-0.5mm}\nabla_{1} f_{i+1} (y_{i}^{(t)},w_{i+1}^{(t)})^{\top} v_{i+1}^{(t)}\hspace{-0.5mm}+\hspace{-0.5mm}\varepsilon_{i+1}^{(t)} \right\|^2\hspace{-0.5mm}\right] \\
\leq &3C_{\nabla f}^2 \mathbb{E}\left[  \left\| \tilde{v}_{i+1}^{(t)}-v_{i+1}^{(t)} \right\|^2 \right]+3C_{v}^2L_{\nabla f}^2\mathbb{E}\left[  \left\| \tilde{y}_{i}^{(t)}-y_{i}^{(t)}  \right\|^2 \right]+3\mathbb{E}\left[  \left\| \varepsilon_{i+1}^{(t)}  \right\|^2 \right]\\
\leq&\cdots\leq3C_{v}^2L_{\nabla f}^2\sum_{j=i}^{N-1}(3C_{\nabla f}^2)^{j-i}\mathbb{E}\left[  \left\| \tilde{y}_{j}^{(t)}-y_{j}^{(t)}  \right\|^2 \right]+3\sum_{j=i}^{N-1}(3C_{\nabla f}^2)^{j-i}\mathbb{E}\left[  \left\| \varepsilon_{j+1}^{(t)}  \right\|^2 \right].
\end{aligned}
\end{equation}
Finally, we bound the error between $\tilde{y}_i^{(t)}$ and $y_i^{(t)}$ for $i = 1, 2, \ldots, N-1$:
\begin{equation}
\label{estimation_y_1}
\begin{aligned}
&\mathbb{E}\left[ \left\| \tilde{y}_i^{(t)}-{y}_i^{(t)} \right\|^2 \right]
= \mathbb{E}\left[ \left\| f_i(\tilde{y}_{i-1}^{(t)},w_i^{(t)})-f_i(y_{i-1}^{(t)},w_i^{(t)})+\delta_i^{(t)} \right\|^2 \right] \\
\leq &2L_{f}^2 \mathbb{E}\left[  \left\| \tilde{y}_{i-1}^{(t)}-{y}_{i-1}^{(t)}  \right\|^2 \right]+2\mathbb{E}\left[  \left\| \delta_{i}^{(t)}  \right\|^2 \right]
\leq\cdots\leq2\sum_{j=1}^i(2L_f^2)^{i-j}\mathbb{E}\left[  \left\| \delta_{j}^{(t)}  \right\|^2 \right].    
\end{aligned}
\end{equation}
Substituting~\eqref{estimation_v_1} into~\eqref{estimation_u_3}, we obtain
\begin{equation}
\label{estimation_u_4}
\begin{aligned}
&\mathbb{E}\left[ \left\| \tilde{\u}^{(t)}-{\u}^{(t)} \right\|^2 \right]\\
\leq &6C_{\nabla f}^2C_{v}^2L_{\nabla f}^2\sum_{i=1}^{N-1}\sum_{j=i}^{N-1}(3C_{\nabla f}^2)^{j-i}\mathbb{E}\left[  \left\| \tilde{y}_{j}^{(t)}-y_{j}^{(t)}  \right\|^2 \right]\\
&+2C_{v}^2L_{\nabla f}^2\sum_{i=1}^{N-1}\mathbb{E}\left[  \left\| \tilde{y}_{i}^{(t)}-y_{i}^{(t)}  \right\|^2 \right]+6C_{\nabla f}^2\sum_{i=1}^{N-1}\sum_{j=i}^{N-1}(3C_{\nabla f}^2)^{j-i}\mathbb{E}\left[  \left\| \varepsilon_{j+1}^{(t)}  \right\|^2 \right]\\
= &2C_{v}^2L_{\nabla f}^2\sum_{i=1}^{N-1}\left[\sum_{j=0}^{i}(3C_{\nabla f}^2)^{j}\right]\mathbb{E}\left[  \left\| \tilde{y}_{i}^{(t)}-y_{i}^{(t)}  \right\|^2 \right]+2\sum_{i=1}^{N-1}\left[\sum_{j=1}^{i}(3C_{\nabla f}^2)^{j}\right]\mathbb{E}\left[  \left\| \varepsilon_{i+1}^{(t)}  \right\|^2 \right].    
\end{aligned}
\end{equation}
Substituting~\eqref{estimation_y_1} into~\eqref{estimation_u_4} then gives
\begin{equation}
\label{estimation_u_5}
\begin{aligned}
\mathbb{E}\left[ \left\| \tilde{\u}^{(t)}-{\u}^{(t)} \right\|^2 \right]
\leq &4C_{v}^2L_{\nabla f}^2\sum_{i=1}^{N-1}\left(\sum_{j=i}^{N-1}\left[\sum_{k=0}^{j}(3C_{\nabla f}^2)^{k}\right](2L_f^2)^{j-i}\right)\mathbb{E}\left[  \left\| \delta_{i}^{(t)}  \right\|^2 \right]\\
&+\sum_{i=1}^{N-1}\left[\sum_{j=1}^{i}(3C_{\nabla f}^2)^{j}\right]\mathbb{E}\left[  \left\| \varepsilon_{i+1}^{(t)}  \right\|^2 \right].    
\end{aligned}
\end{equation}
Defining 
\begin{align}
\label{expr-C-delta-epsilon}
C^{e}_{\delta_i}:=4C_{v}^2L_{\nabla f}^2\sum_{j=i}^{N-1}\left[\sum_{k=0}^{j}(3C_{\nabla f}^2)^{k}\right](2L_f^2)^{j-i},\quad C^{e}_{\varepsilon_{i+1}}:=\sum_{j=1}^{i}(3C_{\nabla f}^2)^{j},
\end{align}
we conclude that~\eqref{error_variance} holds for all $t = 1, 2, \ldots, T$.
\end{proof}
\end{theorem}

Inequality~\eqref{error_variance} shows that the $\ell_2$ error in the stochastic gradient due to forward and backward perturbations is controlled by the $\ell_2$ magnitudes of $\delta$ and $\varepsilon$. Crucially, the coefficients $C^{e}_{\delta_i}$ and $C^{e}_{\varepsilon_{i+1}}$ scale exponentially with the depth of the computational graph, reflecting $e^{\mathcal{O}(N)}$ error amplification. This exponential dependence illustrates how small perturbations at individual operators can compound into significant gradient distortions as they propagate through the network.

We next bound the second moment of the bias with the following lemma.
\begin{lemma}
\label{lemma:biased term estimation}
Under Assumption \ref{assumption:smoothness of the components}, it holds for $i=1,2,\cdots,N$ that:
\begin{equation}
\begin{aligned}
\label{eq:lemma_second moment}
    &\left\Vert\mathbb{E}^{\mathcal{G}_{i-1}}_t[\nabla_{2} f_i (\tilde{y}_{i-1}^{(t)},w_i^{(t)}) -\nabla_{2} f_i (y_{i-1}^{(t)},w_i^{(t)})]\right\Vert^2\\
    \leq&2C_{\nabla^2 f}^2\left\Vert\mathbb{E}^{\mathcal{G}_{i-1}}_t[\tilde{y}_{i-1}^{(t)}-{y}_{i-1}^{(t)}]\right\Vert^2+L_{\nabla^2 f}^2\mathbb{E}^{\mathcal{G}_{i-1}}_t\left[\left\Vert\tilde{y}_{i-1}^{(t)}-{y}_{i-1}^{(t)}\right\Vert^4\right].
\end{aligned}
\end{equation}

\begin{proof}
    Since $f$ is differentiable, we have
    \begin{equation*}
    \begin{aligned}
    &\left\Vert\mathbb{E}^{\mathcal{G}_{i-1}}_t[\nabla_{2} f_i (\tilde{y}_{i-1}^{(t)},w_i^{(t)}) -\nabla_{2} f_i (y_{i-1}^{(t)},w_i^{(t)})]\right\Vert^2\\  
    =&\left\Vert\mathbb{E}^{\mathcal{G}_{i-1}}_t\left[\int_0^1\nabla_{21} f_i (y_{i-1}^{(t)}+t(\tilde{y}_{i-1}^{(t)}-{y}_{i-1}^{(t)}),w_i^{(t)})\left(\tilde{y}_{i-1}^{(t)}-{y}_{i-1}^{(t)}\right) dt\right]\right\Vert^2\\
    \leq&2\left\Vert\nabla_{21} f_i (y_{i-1}^{(t)},w_i^{(t)})\mathbb{E}^{\mathcal{G}_{i-1}}_t[\tilde{y}_{i-1}^{(t)}-{y}_{i-1}^{(t)}]\right\Vert^2\\
    &\hspace{-1.5mm}+\hspace{-0.8mm}2\hspace{-0.5mm}\left\Vert\mathbb{E}^{\mathcal{G}_{i\hspace{-0.3mm}-\hspace{-0.3mm}1}}_t\hspace{-1.5mm}\left[\hspace{-0.5mm}\int_0^1\hspace{-1.8mm}\left(\hspace{-0.5mm}\nabla_{21} f_i (y_{i-1}^{(t)} \hspace{-0.5mm} + \hspace{-0.5mm} t(\tilde{y}_{i-1}^{(t)} \hspace{-0.5mm} - \hspace{-0.5mm} {y}_{i-1}^{(t)}),\hspace{-0.3mm}w_i^{(t)}) \hspace{-0.5mm} - \hspace{-0.5mm}  \nabla_{21} f_i (y_{i-1}^{(t)},w_i^{(t)})\hspace{-0.5mm}\right)\hspace{-0.5mm}(\tilde{y}_{i-1}^{(t)} \hspace{-0.5mm} - \hspace{-0.5mm} {y}_{i-1}^{(t)}) dt\right]\hspace{-0.5mm}\right\Vert^2\\
    \leq&2C_{\nabla^2 f}^2\left\Vert\mathbb{E}^{\mathcal{G}_{i-1}}_t[\tilde{y}_{i-1}^{(t)}-{y}_{i-1}^{(t)}]\right\Vert^2+L_{\nabla^2 f}^2\mathbb{E}^{\mathcal{G}_{i-1}}_t\left[\left\Vert\tilde{y}_{i-1}^{(t)}-{y}_{i-1}^{(t)}\right\Vert^4\right],
    \end{aligned}
    \end{equation*}
    which completes the proof.
\end{proof}
\end{lemma}

Lemma~\ref{lemma:biased term estimation} bounds the second moment of the bias introduced by forward perturbations. Specifically, the squared deviation of the conditional expected gradient is controlled by two terms: the squared norm of the expected forward error $\mathbb{E}^{\mathcal{G}_{i-1}}_t[\tilde{y}_{i-1}^{(t)} - y_{i-1}^{(t)}]$ and the fourth moment of the forward error $\tilde{y}_{i-1}^{(t)} - y_{i-1}^{(t)}$. Building on this result, we derive an upper bound for the second moment of the overall bias as follows.

\begin{theorem}\label{thm-utilde-u}
    Suppose Assumption~\ref{assumption:smoothness of the components} holds. Then for $i = 1, 2, \ldots, N-1$, there exist constants $C^{b}_{\delta_i}, \tilde{C}^{b}_{\delta_i}, C^{b}_{\varepsilon_{i+1}} \geq 0$, which are defined in \eqref{eqwna}, such that
    \begin{equation}
    \begin{aligned}
    \label{error_bias}
&\mathbb{E}\left[\left\|\mathbb{E}_t[\tilde{\boldsymbol{u}}^{(t)}-\boldsymbol{u}^{(t)}]\right\|^2\right]\\
    \leq &\sum_{i=1}^{N-1}C_{\delta_i}^b\mathbb{E}\left[  \left\| \mathbb{E}^{\mathcal{G}_{i}}_t[\delta_{i}^{(t)}]  \right\|^2 \right]+\sum_{i=1}^{N-1}\tilde{C}_{\delta_i}^b\mathbb{E}\left[  \left\| \delta_{i}^{(t)}  \right\|^4 \right] +\sum_{i=1}^{N-1}C^{b}_{\varepsilon_{i+1}}\mathbb{E}\left[  \left\|\mathbb{E}_t^{\mathcal{H}_{i+1}}[\varepsilon_{i+1}^{(t)}]   \right\|^2 \right]
    \end{aligned}
    \end{equation}
    holds for all $t = 1, 2, \ldots, T$.
\begin{proof}
We first consider the gradient evaluation at the last component. By~\eqref{eq:lemma_second moment}, we have
\begin{equation}
\label{estimation_u_1_1}
\begin{aligned}
& \mathbb{E}\left[\left\| \mathbb{E}_t[\tilde{u}_N^{(t)}-{u}_N^{(t)} ]\right\|^2\right] =\mathbb{E}\left[\left\| \mathbb{E}_t[\nabla_{2}f_N(\tilde{y}_{N-1}^{(t)},w_N^{(t)})  - \nabla_{2}f_N(y_{N-1}^{(t)},w_{N}^{(t)}) ] \right\|^2\right] \\
\leq &4C_v^2C_{\nabla^2f}^2\mathbb{E}\left[\left\Vert\mathbb{E}^{\mathcal{G}_{N-1}}_t[\tilde{y}_{N-1}^{(t)}-y_{N-1}^{(t)}]\right\Vert^2\right]+2C_v^2L_{\nabla^2 f}^2 \mathbb{E}\left[  \left\| \tilde{y}_{N-1}^{(t)}-y_{N-1}^{(t)} \right\|^4 \right].    
\end{aligned}
\end{equation}
For $i = 1, 2, \ldots, N-1$, we have
\begin{equation}
\label{estimation_u_2_1_0}
\begin{aligned}
&\mathbb{E}\left[ \left\| \mathbb{E}_t[\tilde{u}_i^{(t)}-{u}_i^{(t)}] \right\|^2 \right]
=\mathbb{E}\left[ \left\|\mathbb{E}_t\left[ \nabla_{2} f_i (\tilde{y}_{i-1}^{(t)},w_i^{(t)})^{\top} \tilde{v}_i^{(t)}-\nabla_{2} f_i (y_{i-1}^{(t)},w_i^{(t)})^{\top}v_i^{(t)}\right] \right\|^2 \right] \\
\leq &2\mathbb{E}\left[ \left\|\mathbb{E}_t\left[ \nabla_{2} f_i (\tilde{y}_{i-1}^{(t)},w_i^{(t)})^{\top}\left(\tilde{v}_i^{(t)}-v_i^{(t)}\right)\right] \right\|^2 \right]\\
&+2\mathbb{E}\left[ \left\|\mathbb{E}_t\left[ \nabla_{2} f_i (\tilde{y}_{i-1}^{(t)},w_i^{(t)})-\nabla_{2} f_i (y_{i-1}^{(t)},w_i^{(t)})\right]^{\top}v_i^{(t)} \right\|^2 \right].
\end{aligned}
\end{equation}
Using the boundedness of $v_i^{(t)}$ and the fact that
\begin{align*}
   &\left\|\mathbb{E}_t\left[ \nabla_{2} f_i (\tilde{y}_{i-1}^{(t)},w_i^{(t)})^{\top}\left(\tilde{v}_i^{(t)}-v_i^{(t)}\right)\right] \right\|^2\hspace{-1.5mm}=\hspace{-1.0mm}\left\|\mathbb{E}_t\left[ \nabla_{2} f_i (\tilde{y}_{i-1}^{(t)},w_i^{(t)})^{\top}\mathbb{E}_t^{\mathcal{H}_i}[\tilde{v}_i^{(t)}-v_i^{(t)}]\right] \right\|^2\\
   \leq&\mathbb{E}_t\hspace{-1.0mm}\left[ \left\|\nabla_{2} f_i (\tilde{y}_{i-1}^{(t)},w_i^{(t)})\hspace{-0.5mm}^{\top}\hspace{-0.5mm}\mathbb{E}_t^{\mathcal{H}_i}[\tilde{v}_i^{(t)}\hspace{-0.5mm}-\hspace{-0.5mm}v_i^{(t)}] \right\|^2\right]\hspace{-1.5mm}=\hspace{-1.0mm}\mathbb{E}_t\hspace{-0.5mm}\left[\hspace{-0.5mm} \left\|\nabla_{2} f_i (\tilde{y}_{i-1}^{(t)},w_i^{(t)})\right\|^2\hspace{-0.5mm}\left\|\mathbb{E}_t^{\mathcal{H}_i}[\tilde{v}_i^{(t)}\hspace{-0.5mm}-\hspace{-0.5mm}v_i^{(t)}] \right\|^2\hspace{-0.5mm}\right]
\end{align*}
(which follows from $\mathcal{F}^{(t)} \subset \mathcal{H}_i^{(t)}$), we obtain
\begin{equation}
\label{estimation_u_2_1}
\begin{aligned}
&\mathbb{E}\left[ \left\| \mathbb{E}_t[\tilde{u}_i^{(t)}-{u}_i^{(t)}] \right\|^2 \right] \\
\leq &2\mathbb{E}\left[ \left\|\nabla_{2} f_i (\tilde{y}_{i-1}^{(t)},w_i^{(t)}) \right\|^2\left\|\mathbb{E}^{\mathcal{H}_i}_t[ \tilde{v}_i^{(t)}-v_i^{(t)}] \right\|^2 \right]\\
&+2C_v^2\mathbb{E}\left[ \left\|\mathbb{E}_t\left[ \nabla_{2} f_i (\tilde{y}_{i-1}^{(t)},w_i^{(t)})-\nabla_{2} f_i (y_{i-1}^{(t)},w_i^{(t)})\right] \right\|^2 \right]\\
\leq&2C_{\nabla f}^2\mathbb{E}\left[ \left\|\mathbb{E}^{\mathcal{H}_i}_t[ \tilde{v}_i^{(t)}-v_i^{(t)}] \right\|^2 \right]+4C_v^2C_{\nabla^2f}^2\mathbb{E}\left[\left\Vert\mathbb{E}^{\mathcal{G}_{i-1}}_t[\tilde{y}_{i-1}^{(t)}-y_{i-1}^{(t)}]\right\Vert^2\right]\\
&+2C_v^2L_{\nabla^2 f}^2 \mathbb{E}\left[  \left\| \tilde{y}_{i-1}^{(t)}-y_{i-1}^{(t)} \right\|^4 \right].    
\end{aligned}
\end{equation}
where the last inequality follows from the boundedness of $\nabla f_i$, Lemma~\ref{lemma:biased term estimation}, and the inclusion $\mathcal{F}^{(t)} \subset \mathcal{G}_{i-1}^{(t)}$. Summing over $i = 1, 2, \ldots, N$ yields
\begin{equation}
\label{estimation_u_3_1}
\begin{aligned}
&\mathbb{E}\left[ \left\| \mathbb{E}_t[\tilde{\u}^{(t)}-{\u}^{(t)}] \right\|^2 \right]=\sum_{i=1}^N\mathbb{E}\left[ \left\| \mathbb{E}_t[\tilde{u}_i^{(t)}-{u}_i^{(t)}] \right\|^2 \right]\\
\leq &2C_{\nabla f}^2\sum_{i=1}^{N-1}\mathbb{E}\left[ \left\|\mathbb{E}^{\mathcal{H}_i}_t[ \tilde{v}_i^{(t)}-v_i^{(t)}] \right\|^2 \right]+4C_v^2C_{\nabla^2f}^2\sum_{i=1}^{N-1}\mathbb{E}\left[\left\Vert\mathbb{E}^{\mathcal{G}_{i}}_t[\tilde{y}_{i}^{(t)}-y_{i}^{(t)}]\right\Vert^2\right]\\
&+2C_{v}^2L_{\nabla^2 f}^2\sum_{i=1}^{N-1}\mathbb{E}\left[  \left\| \tilde{y}_{i}^{(t)}-y_{i}^{(t)}  \right\|^4 \right]. 
\end{aligned}
\end{equation}
In view of~\eqref{estimation_u_1_1} and~\eqref{estimation_u_2_1}, we next analyze $\mathbb{E}_t^{\mathcal{H}_i}[\tilde{v}_i^{(t)} - v_i^{(t)}]$ for $i = 1, 2, \ldots, N-1$. We obtain
\begin{equation}
\label{estimation_v_1_1}
\begin{aligned}
&\mathbb{E}\left[ \left\| \mathbb{E}_t^{\mathcal{H}_i}[\tilde{v}_i^{(t)}-{v}_i^{(t)}] \right\|^2 \right]\\
\leq &\mathbb{E}\hspace{-0.5mm}\left[ \left\| \mathbb{E}_t^{\mathcal{H}_{i+1}}\hspace{-1.0mm}\left[\nabla_{1} f_{i+1} (\tilde{y}_{i}^{(t)},w_{i+1}^{(t)})^{\top} \tilde{v}_{i+1}^{(t)}\hspace{-0.5mm}-\hspace{-0.5mm}\nabla_{1} f_{i+1} (y_{i}^{(t)},w_{i+1}^{(t)})^{\top} v_{i+1}^{(t)}+\varepsilon_{i+1}^{(t)}\right]\hspace{-0.5mm} \right\|^2 \right] \\
\leq &\dfrac{3}{2}\mathbb{E}\left[ \left\| \mathbb{E}_t^{\mathcal{H}_{i+1}}\left[\nabla_{1} f_{i+1} (\tilde{y}_{i}^{(t)},w_{i+1}^{(t)})^{\top} \tilde{v}_{i+1}^{(t)}-\nabla_{1} f_{i+1} (y_{i}^{(t)},w_{i+1}^{(t)})^{\top} v_{i+1}^{(t)}\right] \right\|^2 \right]\\
&+3\mathbb{E}\left[ \left\| \mathbb{E}_t^{\mathcal{H}_{i+1}}\left[\varepsilon_{i+1}^{(t)}\right] \right\|^2 \right] \\
\leq&3C_{\nabla f}^2\mathbb{E}\left[ \left\|\mathbb{E}^{\mathcal{H}_{i+1}}_t[ \tilde{v}_{i+1}^{(t)}-v_{i+1}^{(t)}] \right\|^2 \right]+6C_v^2C_{\nabla^2f}^2\mathbb{E}\left[\left\Vert\mathbb{E}^{\mathcal{G}_{i}}_t[\tilde{y}_{i}^{(t)}-y_{i}^{(t)}]\right\Vert^2\right]\\
&+3C_v^2L_{\nabla^2 f}^2 \mathbb{E}\left[  \left\| \tilde{y}_{i-1}^{(t)}-y_{i-1}^{(t)} \right\|^4 \right]+3\mathbb{E}\left[ \left\| \mathbb{E}_t^{\mathcal{H}_{i+1}}[\varepsilon_{i+1}^{(t)}] \right\|^2 \right]\\
\leq&\cdots\leq6C_{v}^2C_{\nabla^2f}^2\sum_{j=i}^{N-1}(3C_{\nabla f}^2)^{j-i}\mathbb{E}\left[  \left\| \mathbb{E}^{\mathcal{G}_{j}}_t[\tilde{y}_{j}^{(t)}-y_{j}^{(t)}]  \right\|^2 \right]\\
&+\hspace{-0.5mm}3\hspace{-1.0mm}\sum_{j=i}^{N-1}\hspace{-0.5mm}(3C_{\nabla f}^2)^{j-i}\mathbb{E}\hspace{-0.5mm}\left[  \left\|\mathbb{E}_t^{\mathcal{H}_{j+1}}[\varepsilon_{j+1}^{(t)}]   \right\|^2 \right]\hspace{-1.0mm}+\hspace{-0.5mm}3C_{v}^2L_{\nabla^2 f}^2\hspace{-1.0mm}\sum_{j=i}^{N-1}\hspace{-0.5mm}(3C_{\nabla f}^2)^{j-i}\mathbb{E}\hspace{-0.5mm}\left[  \left\| \tilde{y}_{j}^{(t)}\hspace{-1.0mm}-\hspace{-0.5mm}y_{j}^{(t)}  \right\|^4\hspace{-0.5mm} \right]\hspace{-1.0mm}.
\end{aligned}
\end{equation}
Next, we bound $\mathbb{E}^{\mathcal{G}_{i}}_t[\tilde{y}_{i}^{(t)} - y_{i}^{(t)}]$ for $i = 2, \ldots, N-1$:
\begin{equation}
\label{estimation_y_1_1}
\begin{aligned}
&\mathbb{E}\left[ \left\| \mathbb{E}^{\mathcal{G}_{i}}_t[\tilde{y}_{i}^{(t)}-y_{i}^{(t)}] \right\|^2 \right]
= \mathbb{E}\left[ \left\| \mathbb{E}^{\mathcal{G}_{i}}_t\left[f_i(\tilde{y}_{i-1}^{(t)},w_i^{(t)})-f_i(y_{i-1}^{(t)},w_i^{(t)})+\delta_i^{(t)}\right] \right\|^2 \right] \\
\leq &2 \mathbb{E}\left[  \left\| \mathbb{E}^{\mathcal{G}_{i}}_t\left[f_i(\tilde{y}_{i-1}^{(t)},w_i^{(t)})-f_i(y_{i-1}^{(t)},w_i^{(t)})\right]  \right\|^2 \right]+2\mathbb{E}\left[  \left\| \mathbb{E}^{\mathcal{G}_{i}}_t[\delta_{i}^{(t)}]  \right\|^2 \right]\\
\leq&4C_{\nabla f}^2\left\Vert\mathbb{E}_t^{\mathcal{G}_{i-1}}[\tilde{y}_{i-1}^{(t)}-y_{i-1}^{(t)}]\right\Vert^2+2L_{\nabla f}^2\mathbb{E}\left[\left\Vert\tilde{y}_{i-1}^{(t)}-y_{i-1}^{(t)}\right\Vert^4\right]+2\mathbb{E}\left[  \left\| \mathbb{E}^{\mathcal{G}_{i}}_t[\delta_{i}^{(t)}]  \right\|^2 \right]\\
\leq&2L_{\nabla f}^2\sum_{j=1}^{i-1}(4C_{\nabla f}^2)^{i-j-1}\mathbb{E}\left[\left\Vert\tilde{y}_{j}^{(t)}-y_{j}^{(t)}\right\Vert^4\right]+2\sum_{j=1}^i(4C_{\nabla f}^2)^{i-j}\mathbb{E}\left[  \left\| \mathbb{E}^{\mathcal{G}_{j}}_t[\delta_{j}^{(t)}]  \right\|^2 \right],    
\end{aligned}
\end{equation}
where the second inequality follows from Lemma~\ref{lemma:biased term estimation} and the inclusion $\mathcal{G}_{i-1}^{(t)} \subset \mathcal{G}_{i}^{(t)}$. Note that~\eqref{estimation_y_1_1} also holds for $i = 1$. Finally, for $i = 1, 2, \ldots, N-1$, we have
\begin{equation}
\label{estimation_y_11_1}
\begin{aligned}
&\mathbb{E}\left[ \left\| \tilde{y}_i^{(t)}-{y}_i^{(t)} \right\|^4 \right]
= \mathbb{E}\left[ \left\| f_i(\tilde{y}_{i-1}^{(t)},w_i^{(t)})-f_i(y_{i-1}^{(t)},w_i^{(t)})+\delta_i^{(t)} \right\|^4 \right] \\
\leq &8L_{f}^4 \mathbb{E}\left[  \left\| \tilde{y}_{i-1}^{(t)}-{y}_{i-1}^{(t)}  \right\|^4 \right]+8\mathbb{E}\left[  \left\| \delta_{i}^{(t)}  \right\|^4 \right]
\leq\cdots\leq8\sum_{j=1}^i(8L_f^4)^{i-j}\mathbb{E}\left[  \left\| \delta_{j}^{(t)}  \right\|^4 \right].    
\end{aligned}
\end{equation}
Substituting~\eqref{estimation_v_1_1} into~\eqref{estimation_u_3_1} yields
\begin{equation}
\label{estimation_u_4_1}
\begin{aligned}
&\mathbb{E}\left[ \left\| \mathbb{E}_t[\tilde{\u}^{(t)}-{\u}^{(t)}] \right\|^2 \right]\\
\leq &2C_{v}^2\sum_{i=1}^{N-1}\left[\sum_{j=0}^{i}(3C_{\nabla f}^2)^{j}\right]\mathbb{E}\left[2C_{\nabla^2 f}^2\left\Vert\mathbb{E}^{\mathcal{G}_{i}}_t[\tilde{y}_{i}^{(t)}-y_{i}^{(t)}]\right\Vert^2+L_{\nabla^2 f}^2\left\| \tilde{y}_{i}^{(t)}-y_{i}^{(t)}  \right\|^4\right]\\
&+2\sum_{i=1}^{N-1}\left[\sum_{j=1}^{i}(3C_{\nabla f}^2)^{j}\right]\mathbb{E}\left[  \left\|\mathbb{E}_t^{\mathcal{H}_{i+1}}[\varepsilon_{i+1}^{(t)}]   \right\|^2 \right].  
\end{aligned}
\end{equation}
We now consider the first term on the right-hand side of~\eqref{estimation_u_4_1}. Using~\eqref{estimation_y_1_1}, we obtain
\begin{equation}
\begin{aligned}
    &4C_{\nabla^2 f}^2C_{v}^2\sum_{i=1}^{N-1}\left[\sum_{j=0}^{i}(3C_{\nabla f}^2)^{j}\right]\mathbb{E}\left[\left\Vert\mathbb{E}^{\mathcal{G}_{i}}_t[\tilde{y}_{i}^{(t)}-y_{i}^{(t)}]\right\Vert^2\right]\\
    \leq&8C_{\nabla^2 f}^2C_{v}^2\sum_{i=1}^{N-1}\left(\sum_{j=i}^{N-1}\left[\sum_{k=0}^{j}(3C_{\nabla f}^2)^{k}\right](4C_{\nabla f}^2)^{j-i}\right)\mathbb{E}\left[  \left\| \mathbb{E}^{\mathcal{G}_{i}}_t[\delta_{i}^{(t)}]  \right\|^2 \right]\\
    &+\hspace{-0.5mm}8C_{\nabla^2 f}^2L_{\nabla f}^2C_{v}^2\hspace{-0.5mm}\sum_{i=1}^{N-2}\hspace{-1.0mm}\left(\sum_{j=i+1}^{N-1}\hspace{-1.0mm}\left[\sum_{k=0}^{j}(3C_{\nabla f}^2)^{k}\hspace{-0.5mm}\right]\hspace{-0.5mm}(4C_{\nabla f}^2)^{j-i-1}\hspace{-1.5mm}\right)\mathbb{E}\hspace{-0.5mm}\left[  \left\| \tilde{y}_{i}^{(t)}-y_{i}^{(t)}  \right\|^4\hspace{-0.5mm} \right]\hspace{-1.0mm}.
\end{aligned}
\end{equation}
Define
\begin{align*}
     C_{y_i}^2:=&8C_{\nabla^2 f}^2L_{\nabla f}^2C_{v}^2\sum_{j=i+1}^{N-1}\left[\sum_{k=0}^{j}(3C_{\nabla f}^2)^{k}\right](4C_{\nabla f}^2)^{j-i-1}+2L_{\nabla^2 f}^2C_{v}^2\sum_{j=0}^{i}(3C_{\nabla f}^2)^{j}.
\end{align*}
Substituting~\eqref{estimation_y_1_1} into~\eqref{estimation_u_4_1} then gives
\allowdisplaybreaks
\begin{align*}
&\mathbb{E}\left[\left\|\mathbb{E}_t[\tilde{\boldsymbol{u}}^{(t)}-\boldsymbol{u}^{(t)}]\right\|^2\right]\\
\leq &8C_{\nabla^2 f}^2C_{v}^2\sum_{i=1}^{N-1}\left(\sum_{j=i}^{N-1}\left[\sum_{k=0}^{j}(3C_{\nabla f}^2)^{k}\right](4C_{\nabla f}^2)^{j-i}\right)\mathbb{E}\left[  \left\| \mathbb{E}^{\mathcal{G}_{i}}_t[\delta_{i}^{(t)}]  \right\|^2 \right]\\
&+2\hspace{-1.0mm}\sum_{i=1}^{N-1}\hspace{-0.5mm}\left[\hspace{-0.2mm}\sum_{j=1}^{i}(3C_{\nabla f}^2)^{j}\hspace{-0.5mm}\right]\hspace{-0.5mm}\mathbb{E}\hspace{-0.5mm}\left[  \left\|\mathbb{E}_t^{\mathcal{H}_{i+1}}[\varepsilon_{i+1}^{(t)}]   \right\|^2 \right]\hspace{-0.5mm}+\hspace{-0.5mm}8\hspace{-1.0mm}\sum_{i=1}^{N-1}\left[\sum_{j=i}^{N-1}C_{y_j}^2(8L_f^4)^{j-i}\right]\hspace{-0.5mm}\mathbb{E}\hspace{-0.5mm}\left[  \left\| \delta_{i}^{(t)}  \right\|^4 \right]\hspace{-1.0mm}.    
\end{align*}
Defining
\begin{subequations}
\label{eqwna}
\begin{align}
C^{b}_{\delta_i}:=&8L_{\nabla^2 f}^2C_{v}^2\sum_{j=i}^{N-1}\left[\sum_{k=0}^{j}(3C_{\nabla f}^2)^{k}\right](4C_{\nabla f}^2)^{j-i},\label{eqwna-1}\\
C^{b}_{\varepsilon_{i+1}}:=&2\sum_{j=1}^{i}(3C_{\nabla f}^2)^{j},\quad\tilde{C}^{b}_{\delta_i}:=8\sum_{j=i}^{N-1}C_{y_j}^2(8L_f^4)^{j-i},\label{eqwna-2}
\end{align}
\end{subequations}
we conclude that~\eqref{error_bias} holds for all $t = 1, 2, \ldots, T$.
\end{proof}
\end{theorem}

Theorem~\ref{thm-utilde-u} shows that the bias of the gradient estimate is controlled by the expected forward perturbations $\mathbb{E}[\delta]$, the expected backward perturbations $\mathbb{E}[\varepsilon]$, and the \emph{fourth moment} of the forward perturbations $\|\delta\|^4$. As with~\eqref{error_variance}, the coefficients in~\eqref{eqwna} exhibit exponential growth with the depth of the computational graph.

Theorems~\ref{thm-utilde-u-0} and~\ref{thm-utilde-u} together provide a complete characterization of how forward and backward perturbations corrupt gradient estimates, bounding the variance and bias of the gradient error, respectively. Combined, they yield a comprehensive quantification of the deviation between the perturbed gradient $\tilde{\boldsymbol{u}}^{(t)}$ and the ideal $\boldsymbol{u}^{(t)}$.

\section{Convergence of SGD with Perturbed Forward-Backward Passes}
\label{section: Convergence rate with computation error}
Building on the error propagation analysis in Section~\ref{section: Error propagation analysis}, we now establish convergence rates for SGD with perturbed forward and backward passes.
\subsection{Convergence with Non-Convex Assumption}
We begin with a descent lemma for SGD under nonconvex objectives and perturbed forward-backward passes.
\begin{theorem}
    Suppose Assumptions \ref{assumption:smoothness}-\ref{assumption:smoothness of the components} are all satisfied. If the step-size $\gamma\leq\frac{1}{3L_{\nabla\ell}}$, then it holds that:
    \begin{equation}
    \label{eq:l lipschitz1}
    \begin{aligned}
        \dfrac{1}{T}\sum_{t=0}^{T-1}\mathbb{E}\left[\left\Vert\nabla\ell(\w^{(t)})\right\Vert^2\right]
        \leq&\dfrac{6\Delta_0}{\gamma T}+6L_{\nabla \ell}\gamma\sigma^2+\dfrac{6L_{\nabla \ell}\gamma}{T}\sum_{t=0}^{T-1}\mathbb{E}\left[\left\Vert\tilde{\u}^{(t)}-{\u}^{(t)}\right\Vert^2\right]\\
        &+\dfrac{3}{T}\sum_{t=0}^{T-1}\mathbb{E}\left[\left\Vert\mathbb{E}_t[\tilde{\u}^{(t)}-{\u}^{(t)}]\right\Vert^2\right].
    \end{aligned}
    \end{equation}
\begin{proof}
    Since $\nabla\ell$ is $L_{\nabla \ell}$-Lipschitz continuous, we have
    \begin{equation}
    \label{eq:l lipschitz}
    \begin{aligned}
        \ell(\w^{(t+1)})\leq&\ell(\w^{(t)})+\langle \nabla\ell(\w^{(t)}),\w^{(t+1)}-\w^{(t)} \rangle+\dfrac{L_{\nabla \ell}}{2}\left\Vert\w^{(t+1)}-\w^{(t)}\right\Vert^2\\
        =&\ell(\w^{(t)})-\gamma\langle \nabla\ell(\w^{(t)}),\tilde{\u}^{(t)}+{\r}^{(t)} \rangle+\dfrac{L_{\nabla \ell}\gamma^2}{2}\left\Vert\tilde{\u}^{(t)}+{\r}^{(t)}\right\Vert^2\\
        \leq&\ell(\w^{(t)})-\gamma\langle \nabla\ell(\w^{(t)}),\u^{(t)} +{\r}^{(t)}\rangle+L_{\nabla \ell}\gamma^2\left\Vert{\u}^{(t)}+{\r}^{(t)}\right\Vert^2\\
        &+L_{\nabla \ell}\gamma^2\left\Vert\tilde{\u}^{(t)}-{\u}^{(t)}\right\Vert^2
        -\gamma\langle \nabla\ell(\w^{(t)}),\tilde{\u}^{(t)}-{\u}^{(t)} \rangle,
    \end{aligned}
    \end{equation}
    where the last inequality follows from Young's inequality. Taking the conditional expectation with respect to $\mathcal{F}^{(t)}$ yields
    \begin{equation}
    \label{eq:l lipschitz2}
    \begin{aligned}
        &\mathbb{E}_t[\ell(\w^{(t+1)})]\\
        \leq&\ell(\w^{(t)})\hspace{-0.5mm}-\hspace{-0.5mm}\gamma(1\hspace{-0.5mm}-\hspace{-0.5mm}L_{\nabla \ell}\gamma)\left\Vert\nabla\ell(\w^{(t)})\right\Vert^2\hspace{-1.5mm}+\hspace{-0.5mm}L_{\nabla \ell}\gamma^2\mathbb{E}_t\hspace{-0.5mm}\left[\hspace{-0.5mm}\left\Vert\nabla\ell(\w^{(t)})\hspace{-0.5mm}-\hspace{-0.5mm}({\u}^{(t)}\hspace{-0.5mm}+\hspace{-0.5mm}{\r}^{(t)})\right\Vert^2\right]\\
        &+L_{\nabla \ell}\gamma^2\mathbb{E}_t\left[\left\Vert\tilde{\u}^{(t)}-{\u}^{(t)}\right\Vert^2\right]-\gamma\langle \nabla\ell(\w^{(t)}),\mathbb{E}_t[\tilde{\u}^{(t)}-{\u}^{(t)} ]\rangle\\
        \leq&\ell(\w^{(t)})\hspace{-0.5mm}-\hspace{-0.5mm}\gamma(\hspace{-0.5mm}1\hspace{-0.5mm}-\hspace{-0.5mm}L_{\nabla \ell}\gamma\hspace{-0.5mm}-\hspace{-0.5mm}\dfrac{1}{2}\hspace{-0.5mm})\hspace{-0.5mm}\left\Vert\nabla\ell(\w^{(t)})\right\Vert^2\hspace{-2.0mm}+\hspace{-0.5mm}L_{\nabla \ell}\gamma^2\mathbb{E}_t\hspace{-1.0mm}\left[\left\Vert\nabla\ell(\w^{(t)})\hspace{-0.5mm}-\hspace{-0.5mm}({\u}^{(t)}+{\r}^{(t)})\right\Vert^2\right]\\
        &+L_{\nabla \ell}\gamma^2\mathbb{E}_t\left[\left\Vert\tilde{\u}^{(t)}-{\u}^{(t)}\right\Vert^2\right]+\frac{\gamma}{2}\left\Vert\mathbb{E}_t[\tilde{\u}^{(t)}-{\u}^{(t)} ]\right\Vert^2\\\leq&\ell(\w^{(t)})-\dfrac{1}{6}\gamma\left\Vert\nabla\ell(\w^{(t)})\right\Vert^2+L_{\nabla \ell}\gamma^2\sigma^2+L_{\nabla \ell}\gamma^2\mathbb{E}_t\left[\left\Vert\tilde{\u}^{(t)}-{\u}^{(t)}\right\Vert^2\right]\\
        &+\dfrac{\gamma}{2}\left\Vert\mathbb{E}_t[\tilde{\u}^{(t)}-{\u}^{(t)}]\right\Vert^2,
    \end{aligned}
    \end{equation}
    where the first inequality uses the fact that $\w^{(t)}$ is measurable with respect to $\mathcal{G}^{(t)}$ and that $\u^{(t)}$ is an unbiased estimator of $\nabla\ell(\w^{(t)})$ (Assumption~\ref{assumption:unbiased}). The last inequality follows from the bounded variance of $\u^{(t)}$ (Assumption~\ref{assumption:unbiased}) and the step size condition $L_{\nabla \ell}\gamma \leq \frac{1}{3}$. Taking the full expectation and summing over $t = 0, 1, \ldots, T-1$, we obtain
    \begin{equation*}
    \begin{aligned}
        \dfrac{1}{T}\sum_{t=0}^{T-1}\mathbb{E}\left[\left\Vert\nabla\ell(\w^{(t)})\right\Vert^2\right]
        \leq&\dfrac{6\Delta_0}{\gamma T}+6L_{\nabla \ell}\gamma\sigma^2+\dfrac{6L_{\nabla \ell}\gamma}{T}\sum_{t=0}^{T-1}\mathbb{E}\left[\left\Vert\tilde{\u}^{(t)}-{\u}^{(t)}\right\Vert^2\right]\\
        &+\dfrac{3}{T}\sum_{t=0}^{T-1}\mathbb{E}\left[\left\Vert\mathbb{E}_t[\tilde{\u}^{(t)}-{\u}^{(t)}]\right\Vert^2\right],
    \end{aligned}
    \end{equation*}
    which establishes~\eqref{eq:l lipschitz1}.
\end{proof}
\end{theorem}

Inequality~\eqref{eq:l lipschitz1} shows that the impact of forward and backward perturbations on convergence can be decomposed into two terms. The first is the variance of the gradient error, $\mathbb{E}_t[\|\tilde{\u}^{(t)} - \u^{(t)}\|^2]$, which measures the $\ell_2$ deviation between the perturbed gradient $\tilde{\u}^{(t)}$ and the ideal stochastic gradient $\u^{(t)}$. The second is the squared bias, $\|\mathbb{E}_t[\tilde{\u}^{(t)} - \u^{(t)}]\|^2$, which captures the systematic error introduced by the perturbations. Both terms were analyzed in Section~\ref{section: Error propagation analysis}. 

Substituting the bounds from~\eqref{error_variance} and~\eqref{error_bias} into~\eqref{eq:l lipschitz1} yields the following convergence rate in the nonconvex setting.
\begin{theorem}
\label{thm: 5.2}
    Suppose Assumptions \ref{assumption:smoothness}-\ref{assumption:smoothness of the components} are all satisfied. If the step-size $\gamma\leq\frac{1}{3L_{\nabla\ell}}$, then it holds that:
    \begin{equation}
    \label{eq:l lipschitz1_new}
    \begin{aligned}
        &\dfrac{1}{T}\sum_{t=0}^{T-1}\mathbb{E}\left[\left\Vert\nabla\ell(\w^{(t)})\right\Vert^2\right]\\
        \leq&\dfrac{6\Delta_0}{\gamma T}+6L_{\nabla \ell}\gamma\sigma^2+\dfrac{6L_{\nabla \ell}\gamma}{T}\sum_{t=0}^{T-1}\left(\sum_{i=1}^{N-1}C^{e}_{\delta_i}\mathbb{E}\left[  \left\| \delta_{i}^{(t)}  \right\|^2 \right]+\sum_{i=2}^{N}C^{e}_{\varepsilon_{i}}\mathbb{E}\left[  \left\| \varepsilon_{i}^{(t)}  \right\|^2 \right]\right)\\
        &\hspace{-1.6mm}+\hspace{-1.0mm}\dfrac{3}{T}\hspace{-0.5mm}\sum_{t=0}^{T-1}\hspace{-1.0mm}\left(\hspace{-0.5mm}\sum_{i=1}^{N-1}\hspace{-1.0mm}C_{\delta_i}^b\mathbb{E}\hspace{-0.5mm}\left[\hspace{-0.3mm}  \left\| \mathbb{E}^{\mathcal{G}_{i}}_t[\delta_{i}^{(t)}]  \right\|^2 \right]\hspace{-1.0mm}+\hspace{-0.5mm}\sum_{i=2}^{N}\hspace{-0.5mm}C^{b}_{\varepsilon_{i}}\mathbb{E}\hspace{-0.5mm}\left[\hspace{-0.3mm}  \left\|\mathbb{E}_t^{\mathcal{H}_{i}}[\varepsilon_{i}^{(t)}]   \right\|^2 \right]\hspace{-1.0mm}+\hspace{-1.0mm}\sum_{i=1}^{N-1}\hspace{-0.8mm}\tilde{C}_{\delta_i}^b\mathbb{E}\hspace{-0.5mm}\left[ \hspace{-0.3mm} \left\| \delta_{i}^{(t)} \right\|^4 \right]\hspace{-1.0mm}\right)\hspace{-1.0mm}.
    \end{aligned}
    \end{equation}
\end{theorem}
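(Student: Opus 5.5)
This is a direct composition of three earlier results. We start from the descent bound \eqref{eq:l lipschitz1} of the preceding theorem, which holds under Assumptions~\ref{assumption:smoothness}--\ref{assumption:smoothness of the components} with $\gamma\le\frac{1}{3L_{\nabla\ell}}$. It controls the averaged squared gradient norm by four pieces: $6\Delta_0/(\gamma T)$, $6L_{\nabla\ell}\gamma\sigma^2$, the averaged second moment $\mathbb{E}[\|\tilde{\u}^{(t)}-\u^{(t)}\|^2]$ weighted by $6L_{\nabla\ell}\gamma$, and the averaged squared bias $\mathbb{E}[\|\mathbb{E}_t[\tilde{\u}^{(t)}-\u^{(t)}]\|^2]$ weighted by $3$. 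The first two pieces already appear verbatim in \eqref{eq:l lipschitz1_new}, so only the last two need treatment.

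For the third piece, we invoke Theorem~\ref{thm-utilde-u-0} at each $t$. Inequality~\eqref{error_variance} bounds $\mathbb{E}[\|\tilde{\u}^{(t)}-\u^{(t)}\|^2]$ by $\sum_{i=1}^{N-1}C^{e}_{\delta_i}\mathbb{E}[\|\delta_i^{(t)}\|^2]+\sum_{i=1}^{N-1}C^{e}_{\varepsilon_{i+1}}\mathbb{E}[\|\varepsilon_{i+1}^{(t)}\|^2]$. Reindexing the second sum by $i\mapsto i-1$ turns it into $\sum_{i=2}^{N}C^{e}_{\varepsilon_i}\mathbb{E}[\|\varepsilon_i^{(t)}\|^2]$. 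We then multiply by $6L_{\nabla\ell}\gamma/T$ and sum over $t=0,\dots,T-1$. Since all coefficients are nonnegative, the inequality direction is preserved.

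For the fourth piece, we invoke Theorem~\ref{thm-utilde-u} in the same way. Inequality~\eqref{error_bias} bounds the squared bias by the $C^b_{\delta_i}$ terms, the $\tilde C^b_{\delta_i}$ fourth-moment terms and the $C^b_{\varepsilon_{i+1}}$ terms. After the same reindexing $i+1\to i$ on the backward sum (with $\mathcal{H}_{i+1}\to\mathcal{H}_i$), we multiply by $3/T$ and sum over $t$. Substituting both bounds into \eqref{eq:l lipschitz1} gives exactly \eqref{eq:l lipschitz1_new}.

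No step is a real obstacle. The only points needing care are:
\begin{itemize}
\item The two error-propagation theorems are stated for $t=1,\dots,T$, while the sum here runs over $t=0,\dots,T-1$. Their proofs use only Assumption~\ref{assumption:smoothness of the components} and are pointwise in $t$, so they apply equally at $t=0$.
\item The index shifts on the backward-perturbation sums must be done consistently.
\item The per-$t$ bounds must be substituted inside full expectations, so that the conditional expectations $\mathbb{E}^{\mathcal{G}_i}_t$ and $\mathbb{E}^{\mathcal{H}_i}_t$ appear exactly as in \eqref{error_bias}.
\end{itemize}
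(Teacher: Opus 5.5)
Your proposal is correct and matches the paper's argument. The paper obtains Theorem~\ref{thm: 5.2} by substituting \eqref{error_variance} and \eqref{error_bias} into the descent bound \eqref{eq:l lipschitz1}, reindexing the backward-perturbation sums from $i+1$ to $i$, and it leaves implicit your observation that the per-iteration bounds, stated for $t=1,\dots,T$, also hold at $t=0$.
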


\begin{remark}
Setting $\delta_i^{(t)} = 0$ and $\varepsilon_i^{(t)} = 0$ recovers the standard SGD without perturbations in forward and backward passes. In this case, choosing the step size $\gamma = \mathcal{O}(T^{-\frac{1}{2}})$ in~\eqref{eq:l lipschitz1_new} yields the classical $\mathcal{O}(T^{-\frac{1}{2}})$ convergence rate of SGD in the nonconvex setting.
\end{remark}

\subsection{Convergence with PL Condition}
Analogous to the nonconvex analysis, we derive the convergence rate under the PL condition.
\begin{theorem}
\label{thm54}
    Suppose Assumption \ref{assumption:smoothness}-\ref{assumption:pl} are all satisfied. If the step-size $\gamma\leq\frac{1}{3L_{\nabla\ell}}$, then it holds that:
    \begin{equation}
    \begin{aligned}\label{eq:l lipschitz4_new}
        &\hspace{-8mm}\mathbb{E}\left[\ell(\w^{(T)})-\ell^*\right] \\
        &\leq\left(1-\dfrac{\mu\gamma}{3}\right)^T\Delta_0+\dfrac{3L_{\nabla\ell}\gamma\sigma^2}{\mu} \\
         &+L_{\nabla \ell}\gamma^2\sum_{t=0}^{T-1}\left(1-\dfrac{\mu\gamma}{3}\right)^{T-t}\left(\sum_{i=1}^{N-1}C^{e}_{\delta_i}\mathbb{E}\left[  \left\| \delta_{i}^{(t)}  \right\|^2 \right]+\sum_{i=2}^{N}C^{e}_{\varepsilon_{i}}\mathbb{E}\left[  \left\| \varepsilon_{i}^{(t)}  \right\|^2 \right]\right)\\
        &+\dfrac{\gamma}{2}\sum_{t=0}^{T-1}\left(1-\dfrac{\mu\gamma}{3}\right)^{T-t}\left(\sum_{i=1}^{N-1}C_{\delta_i}^b\mathbb{E}\left[  \left\| \mathbb{E}^{\mathcal{G}_{i}}_t[\delta_{i}^{(t)}]  \right\|^2 \right]+\sum_{i=2}^{N}C^{b}_{\varepsilon_{i}}\mathbb{E}\left[  \left\|\mathbb{E}_t^{\mathcal{H}_{i}}[\varepsilon_{i}^{(t)}]   \right\|^2 \right]\right.\\
        &\quad\left.+\sum_{i=1}^{N-1}\tilde{C}_{\delta_i}^b\mathbb{E}\left[  \left\| \delta_{i}^{(t)}  \right\|^4 \right]\right).
    \end{aligned}
    \end{equation}
\begin{proof}
    Under Assumption~\ref{assumption:unbiased}, it follows from~\eqref{eq:l lipschitz2} that
    {
    \begin{equation}
    \label{eq:l lipschitz21}
    \begin{aligned}
        \mathbb{E}_t\left[\ell(\w^{(t+1)})\right]
        \leq&\ell(\w^{(t)})-\dfrac{\mu\gamma}{3}(\ell(\w^{(t)})-\ell^*)+L_{\nabla\ell}\gamma^2\sigma^2\\
        &+L_{\nabla \ell}\gamma^2\mathbb{E}_t\left[\left\Vert\tilde{\u}^{(t)}-{\u}^{(t)}\right\Vert^2\right]+\dfrac{\gamma}{2}\mathbb{E}_t\left\Vert\mathbb{E}_t[\tilde{\u}^{(t)}-{\u}^{(t)}]\right\Vert^2.
    \end{aligned}
    \end{equation}}
Taking the full expectation on both sides yields
    \begin{equation}
    \label{eq:l lipschitz3}
    \begin{aligned}
        \mathbb{E}\left[\ell(\w^{(t+1)})-\ell^*\right]
        \leq&\left(1-\dfrac{\mu\gamma}{3}\right)\mathbb{E}\left[\ell(\w^{(t)})-\ell^*\right]+L_{\nabla\ell}\gamma^2\sigma^2\\
        &+L_{\nabla \ell}\gamma^2\mathbb{E}\left[\left\Vert\tilde{\u}^{(t)}-{\u}^{(t)}\right\Vert^2\right]+\dfrac{\gamma}{2}\mathbb{E}\left[\left\Vert\mathbb{E}_t[\tilde{\u}^{(t)}-{\u}^{(t)}]\right\Vert^2\right].
    \end{aligned}
    \end{equation}
Applying this recursively, we obtain
    \begin{equation}
        \label{eq:new}
    \begin{aligned}
        &\mathbb{E}\left[\ell(\w^{(T)})-\ell^*\right]\\
        \leq&\left(1-\dfrac{\mu\gamma}{3}\right)^T\Delta_0+\dfrac{3L_{\nabla\ell}\gamma\sigma^2}{\mu}+L_{\nabla \ell}\gamma^2\sum_{t=0}^{T-1}\left(1-\dfrac{\mu\gamma}{3}\right)^{T-t}\mathbb{E}\left[\left\Vert\tilde{\u}^{(t)}-{\u}^{(t)}\right\Vert^2\right]\\
        &+\dfrac{\gamma}{2}\sum_{t=0}^{T-1}\left(1-\dfrac{\mu\gamma}{3}\right)^{T-t}\mathbb{E}\left[\left\Vert\mathbb{E}_t[\tilde{\u}^{(t)}-{\u}^{(t)}]\right\Vert^2\right].
    \end{aligned}
    \end{equation}
Finally, substituting~\eqref{error_variance} and~\eqref{error_bias} into~\eqref{eq:new} gives
    \begin{equation}
    \label{eq:l lipschitz4_new2}
    \begin{aligned}
        \mathbb{E}\left[\ell(\w^{(T)})-\ell^*\right]
        \leq&\left(1-\dfrac{\mu\gamma}{3}\right)^T\Delta_0+\dfrac{3L_{\nabla\ell}\gamma\sigma^2}{\mu}\\
        +&L_{\nabla \ell}\gamma^2\sum_{t=0}^{T-1}\left(1-\dfrac{\mu\gamma}{3}\right)^{T-t}\left(\sum_{i=1}^{N-1}C^{e}_{\delta_i}\mathbb{E}\left[  \left\| \delta_{i}^{(t)}  \right\|^2 \right]+\sum_{i=2}^{N}C^{e}_{\varepsilon_{i}}\mathbb{E}\left[  \left\| \varepsilon_{i}^{(t)}  \right\|^2 \right]\right)\\
        +&\dfrac{\gamma}{2}\sum_{t=0}^{T-1}\left(1-\dfrac{\mu\gamma}{3}\right)^{T-t}\left(\sum_{i=1}^{N-1}C_{\delta_i}^b\mathbb{E}\left[  \left\| \mathbb{E}^{\mathcal{G}_{i}}_t[\delta_{i}^{(t)}]  \right\|^2 \right]+\sum_{i=2}^{N}C^{b}_{\varepsilon_{i}}\mathbb{E}\left[  \left\|\mathbb{E}_t^{\mathcal{H}_{i}}[\varepsilon_{i}^{(t)}]   \right\|^2 \right]\right.\\
        &\quad\left.+\sum_{i=1}^{N-1}\tilde{C}_{\delta_i}^b\mathbb{E}\left[  \left\| \delta_{i}^{(t)}  \right\|^4 \right]\right),
    \end{aligned}
    \end{equation}
which establishes~\eqref{eq:l lipschitz4_new}.
\end{proof}
\end{theorem}

\begin{remark}
Setting $\delta_i^{(t)} = 0$ and $\varepsilon_i^{(t)} = 0$ recovers the standard SGD without perturbations in forward and backward passes. In this case, choosing the step size $\gamma = \mathcal{O}(T^{-1}\ln T)$ in~\eqref{eq:l lipschitz4_new} yields the classical $\tilde{\mathcal{O}}(T^{-1})$ convergence rate of SGD under the PL condition.
\end{remark}

\subsection{Interpreting the Convergence Theorems}
Theorems~\ref{thm: 5.2} and~\ref{thm54} offer a clean separation among optimization progress, stochastic sampling noise, and endogenous perturbation effects induced by corrupted forward/backward passes, providing a structured framework for examining the distinct role of each component.

\vspace{1mm}
\noindent \textbf{Exponential coefficient scaling with depth.}
The coefficients presented in Theorems~\ref{thm: 5.2} and Theorems~\ref{thm54}, namely $C^{e}_{\delta_i},C^{e}_{\varepsilon_i},C^{b}_{\delta_i},C^{b}_{\varepsilon_i},\widetilde C^{b}_{\delta_i}$, encode how perturbations at individual operators impact the gradient error. As the explicit formulas in Theorems~\ref{thm-utilde-u-0} and~\ref{thm-utilde-u} reveal, these coefficients take a geometric-series form and can therefore scale \emph{exponentially} in the depth~$N$ whenever per-operator Lipschitz factors exceed~$1$ (see Section~\ref{section: Error propagation analysis}).

\vspace{1mm}
\noindent \textbf{Structural asymmetry between forward and backward perturbations.}
No single scalar ordering holds across all regimes, as the coefficients are operator- and depth-dependent. Nevertheless, Theorems~\ref{thm: 5.2} and~\ref{thm54} imply a robust qualitative hierarchy: \emph{forward-pass perturbations~$\delta$ can introduce non-vanishing bias through higher-order effects even when zero-mean, whereas backward-pass perturbations~$\varepsilon$ do not.} This asymmetry is the primary structural reason why Section~\ref{section: Recover from computation error and gradient spike} permits much weaker constraints on~$\delta$ than on~$\varepsilon$ (see further discussion in Sections~\ref{subsection:continous error} and~\ref{subsection:intermittent error}).

\vspace{1mm}
\noindent \textbf{Sensitivity to perturbation location.}
Forward perturbations near the input (i.e., operators with small index~$i$) propagate through many subsequent operators, whereas backward perturbations near the output (i.e., operators with large index~$i$) propagate through many upstream Jacobian factors. 
Consequently, when only a single layer is perturbed, the two cases exhibit opposite depth hierarchies: for \emph{forward} perturbations~$(\delta_1,\delta_{N-1})$, the shallow error~$\delta_1$ is typically far more harmful than the deep error~$\delta_{N-1}$; conversely, for \emph{backward} perturbations~$(\varepsilon_N,\varepsilon_2)$, the deep error~$\varepsilon_N$ is typically far more harmful than its shallow counterpart~$\varepsilon_2$. This hierarchy is  captured by the layer-dependent coefficients~$C^{e}_{\delta_i}$ and~$C^{e}_{\varepsilon_{i+1}}$ in Theorems~\ref{thm: 5.2} and~\ref{thm54}.

\section{Conditions Ensuring Convergence under Perturbations}
\label{section: Recover from computation error and gradient spike}
This section identifies conditions on the magnitude and frequency of perturbations under which SGD still converges. We focus on two types of perturbations:

\begin{itemize}
\vspace{1mm}
\item[\textbf{(1)}] \textbf{Frequent perturbations:} Perturbations occur at every iteration with unknown energy, as in communication errors in pipeline compression~\cite{wang2022fine}. We seek to characterize the energy levels that still guarantee convergence.

\item[\textbf{(2)}] \textbf{Intermittent perturbation: } Perturbations occur sporadically with fixed energy, as in bit flips. We seek to characterize the frequency of occurrence that still guarantees convergence. 
\end{itemize}
\subsection{Convergence Conditions for Frequent Perturbation}
\label{subsection:continous error}
The following corollary provides sufficient conditions on the energy of frequent perturbations that guarantee convergence.

\begin{corollary}
\label{corollary:continous error}
    Under Assumptions~\ref{assumption:smoothness}--\ref{assumption:smoothness of the components}, SGD achieves a convergence rate of $\mathcal{O}(T^{-\frac{1}{2}})$ if for all $t = 1, 2, \ldots, T$, the perturbations $\delta_i^{(t)}$ and $\varepsilon_i^{(t)}$ satisfy
    \begin{align}
    \label{continus_nonconvex}
        \left\| \delta_{i}^{(t)}  \right\|\lesssim T^{-\frac{1}{8}},\quad\left\| \mathbb{E}^{\mathcal{G}_{i}}_t[\delta_{i}^{(t)}]  \right\| \lesssim T^{-\frac{1}{4}},\quad \left\| \varepsilon_{i}^{(t)}  \right\|\lesssim 1,\quad \left\| \mathbb{E}^{\mathcal{H}_{i}}_t[\varepsilon_{i}^{(t)}]  \right\| \lesssim T^{-\frac{1}{4}}.
    \end{align}
    Moreover, if Assumption~\ref{assumption:pl} also holds, then SGD achieves a convergence rate of $\tilde{\mathcal{O}}(T^{-1})$ if for all $t = 1, 2, \ldots, T$, the perturbations satisfy
    \begin{equation}
    \begin{aligned}
    \label{continus_pl}
        \left\| \delta_{i}^{(t)}  \right\|&\lesssim T^{-\frac{1}{4}}\ln^{\frac{1}{4}}T,\quad \left\| \mathbb{E}^{\mathcal{G}_{i}}_t[\delta_{i}^{(t)}]  \right\| \lesssim T^{-\frac{1}{2}}\ln^{\frac{1}{2}}T,\\
        \left\| \varepsilon_{i}^{(t)}  \right\|&\lesssim 1,\hspace{17.5mm} \left\| \mathbb{E}^{\mathcal{H}_{i}}_t[\varepsilon_{i}^{(t)}]  \right\| \lesssim T^{-\frac{1}{2}}\ln^{\frac{1}{2}}T.
    \end{aligned}
    \end{equation}
\begin{proof}
    Setting $\gamma = \mathcal{O}(T^{-\frac{1}{2}})$ and substituting~\eqref{continus_nonconvex} into~\eqref{eq:l lipschitz1_new} yields
    \begin{equation}
    \label{eq:6.3}
    \begin{aligned}
        &\dfrac{1}{T}\sum_{t=0}^{T-1}\mathbb{E}\left[\left\Vert\nabla\ell(\w^{(t)})\right\Vert^2\right]\\
        \lesssim&\dfrac{1}{\sqrt{T}}+\dfrac{1}{T^{\frac{3}{2}}}
        \sum_{t=0}^{T-1}\left(\sum_{i=1}^{N-1}\mathbb{E}\left[  \left\| \delta_{i}^{(t)}  \right\|^2 \right]+\sum_{i=2}^{N}\mathbb{E}\left[  \left\| \varepsilon_{i}^{(t)}  \right\|^2 \right]\right)\\
        &+\dfrac{1}{T}\sum_{t=0}^{T-1}\left(\sum_{i=1}^{N-1}\mathbb{E}\left[  \left\| \mathbb{E}^{\mathcal{G}_{i}}_t[\delta_{i}^{(t)}]  \right\|^2 \right]+\sum_{i=2}^{N}\mathbb{E}\left[  \left\|\mathbb{E}_t^{\mathcal{H}_{i}}[\varepsilon_{i}^{(t)}]   \right\|^2 \right]+\sum_{i=1}^{N-1}\mathbb{E}\left[  \left\| \delta_{i}^{(t)}  \right\|^4 \right]\right).
    \end{aligned}
    \end{equation}
    To achieve the $\mathcal{O}(T^{-\frac{1}{2}})$ rate, it suffices to have
    \begin{subequations}
    \label{condition_nonconvex_frequent}
    \begin{align}
        &\sum_{t=0}^{T-1}\left(\sum_{i=1}^{N-1}\mathbb{E}\left[  \left\| \delta_{i}^{(t)}  \right\|^2 \right]+\sum_{i=2}^{N}\mathbb{E}\left[  \left\| \varepsilon_{i}^{(t)}  \right\|^2 \right]\right)\lesssim T,\\
        &\sum_{t=0}^{T-1}\left(\sum_{i=1}^{N-1}\mathbb{E}\left[  \left\| \mathbb{E}^{\mathcal{G}_{i}}_t[\delta_{i}^{(t)}]  \right\|^2 \right]+\sum_{i=2}^{N}\mathbb{E}\left[  \left\|\mathbb{E}_t^{\mathcal{H}_{i}}[\varepsilon_{i}^{(t)}]   \right\|^2 \right]+\sum_{i=1}^{N-1}\mathbb{E}\left[  \left\| \delta_{i}^{(t)}  \right\|^4 \right]\right)\lesssim T^{\frac{1}{2}}.
    \end{align}        
    \end{subequations}
    If $\delta_i^{(t)}$ and $\varepsilon_i^{(t)}$ satisfy~\eqref{continus_nonconvex} for $t = 1, 2, \ldots, T$, then~\eqref{condition_nonconvex_frequent} holds, yielding $\mathcal{O}(T^{-\frac{1}{2}})$ rate.
    
    Now suppose Assumption~\ref{assumption:pl} also holds. Setting $\gamma = \mathcal{O}(T^{-1}\ln T)$ and substituting~\eqref{continus_pl} into~\eqref{eq:l lipschitz4_new} gives
    \begin{equation}
    \begin{aligned}
        &\mathbb{E}\left[\ell(\w^{(T)})-\ell^*\right]
        \lesssim\dfrac{1}{T}+\dfrac{\ln T}{T}\\
        +&\dfrac{\ln^2T}{T^2}\sum_{t=0}^{T-1}\left(1-\dfrac{\ln T}{T}\right)^{T-t}\hspace{-1mm}\left(\sum_{i=1}^{N-1}\mathbb{E}\left[  \left\| \delta_{i}^{(t)}  \right\|^2 \right]\hspace{-1mm}+\hspace{-1mm}\sum_{i=2}^{N}\mathbb{E}\left[  \left\| \varepsilon_{i}^{(t)}  \right\|^2 \right]\right)\\
        +&\dfrac{\ln T}{T}\hspace{-0.5mm}\sum_{t=0}^{T-1}\hspace{-0.5mm}\left(\hspace{-1.0mm}1\hspace{-0.5mm}-\hspace{-0.5mm}\dfrac{\ln T}{T}\hspace{-0.5mm}\right)^{\hspace{-1.0mm}T-t}\hspace{-1.5mm}\left(\sum_{i=1}^{N-1}\hspace{-0.5mm}\mathbb{E}\hspace{-0.5mm}\left[ \hspace{-0.5mm} \left\| \mathbb{E}^{\mathcal{G}_{i}}_t[\delta_{i}^{(t)}]  \right\|^2 \right]\hspace{-1mm}+\hspace{-1mm}\sum_{i=2}^{N}\hspace{-0.5mm}\mathbb{E}\hspace{-0.5mm}\left[\hspace{-0.5mm}  \left\|\mathbb{E}_t^{\mathcal{H}_{i}}[\varepsilon_{i}^{(t)}]   \right\|^2 \right]\hspace{-1mm}+\hspace{-1mm}\sum_{i=1}^{N-1}\hspace{-0.5mm}\mathbb{E}\hspace{-0.5mm}\left[ \hspace{-0.5mm} \left\| \delta_{i}^{(t)}  \right\|^4 \right]\hspace{-1.0mm}\right)\hspace{-1.0mm}.
    \end{aligned}
    \end{equation}
    To achieve the $\tilde{\mathcal{O}}(T^{-1})$ rate, it suffices to have
    \begin{subequations}
\label{condition_pl_frequent}
    \begin{align}
        &\sum_{i=1}^{N-1}\mathbb{E}\left[  \left\| \delta_{i}^{(t)}  \right\|^2 \right]+\sum_{i=2}^{N}\mathbb{E}\left[  \left\| \varepsilon_{i}^{(t)}  \right\|^2 \right]\lesssim 1,\\
        &\sum_{i=1}^{N-1}\mathbb{E}\left[  \left\| \mathbb{E}^{\mathcal{G}_{i}}_t[\delta_{i}^{(t)}]  \right\|^2 \right]+\sum_{i=2}^{N}\mathbb{E}\left[  \left\|\mathbb{E}_t^{\mathcal{H}_{i}}[\varepsilon_{i}^{(t)}]   \right\|^2 \right]+\sum_{i=1}^{N-1}\mathbb{E}\left[  \left\| \delta_{i}^{(t)}  \right\|^4 \right]\lesssim \dfrac{\ln T}{T},
    \end{align}        
    \end{subequations}
    where we have used the fact that
    \begin{align*}
        \sum_{t=0}^{T-1}\left(1-\dfrac{\ln T}{T}\right)^{T-t}\lesssim\dfrac{\ln T}{T}.
    \end{align*}
    If $\delta_i^{(t)}$ and $\varepsilon_i^{(t)}$ satisfy~\eqref{continus_pl} for $t = 1, 2, \ldots, T$, then~\eqref{condition_pl_frequent} holds, yielding $\tilde{\mathcal{O}}(T^{-1})$ rate.
\end{proof}
\end{corollary}

\begin{remark}
\label{rem:freq-thresholds}
Corollary~\ref{corollary:continous error} quantifies admissible magnitudes of \emph{frequent} perturbations along the forward and backward passes.
From~\eqref{continus_nonconvex} and~\eqref{continus_pl}, one sees that the conditional bias terms
$\|\mathbb{E}^{\mathcal{G}_{i}}_t[\delta_{i}^{(t)}]\|$ and $\|\mathbb{E}^{\mathcal{H}_{i}}_t[\varepsilon_{i}^{(t)}]\|$
are subject to stricter requirements than the corresponding second-moment terms
$\|\delta_{i}^{(t)}\|$ and $\|\varepsilon_{i}^{(t)}\|$.
More importantly, the forward perturbations $\delta_i^{(t)}$ must satisfy stronger conditions than the backward perturbations $\varepsilon_i^{(t)}$
to retain the error-free rate.
For example, when both perturbations are unbiased (i.e., $\mathbb{E}^{\mathcal{G}_{i}}_t[\delta_{i}^{(t)}]=0$ and $\mathbb{E}^{\mathcal{H}_{i}}_t[\varepsilon_{i}^{(t)}]=0$), SGD on nonconvex objectives can still achieve $\mathcal{O}(T^{-1/2})$ with $\|\varepsilon_i^{(t)}\|=\mathcal{O}(1)$,
but generally fails to do so with $\|\delta_i^{(t)}\|=\mathcal{O}(1)$.
This forward/backward asymmetry will be further reflected in the remark below.
\end{remark}

\begin{remark}\label{rmk-requrent}
Inequality~\eqref{eq:6.3} suggests decomposing the   bound into two components: an optimization error that vanishes with increased optimization effort (smaller $\gamma$ and/or larger $T$), and a perturbation-induced perturbation error. As noted in Remark~\ref{rem:freq-thresholds}, under frequent perturbations, forward errors constitute the primary obstruction to eliminating this perturbation error: unless the forward perturbations satisfy the stricter decay conditions, the bound predicts a non-vanishing plateau even as $\gamma \to 0$ or $T \to \infty$. By contrast, in the unbiased regime, backward perturbations are comparatively benign and contribute primarily to the optimization error. In Section~\ref{section: Experiments}, we illustrate this distinction explicit through two parameter sweeps: a coupled sweep with $\gamma = 1/\sqrt{T}$, and a fixed-horizon sweep with common $T$ and varying $\gamma$.
\end{remark}

\subsection{Convergence Conditions for Intermittent Perturbation}
\label{subsection:intermittent error}
In this subsection, we consider the convergence of SGD under intermittent perturbations. For each time step $t$ and component $i \in \{1, \cdots, N\}$, we say that an intermittent perturbation occurs in the forward pass if $\delta_{i}^{(t)}$ is of order $\mathcal{O}(1)$; otherwise, $\delta_{i}^{(t)} = 0$. Similarly, for the backward pass, an intermittent perturbation occurs if $\varepsilon_{i}^{(t)}$ is of order $\mathcal{O}(1)$; otherwise, $\varepsilon_{i}^{(t)} = 0$. We let $Q_{\delta}$ and $Q_{\varepsilon}$ denote the total number of perturbation occurrences throughout optimization in the forward and backward passes, respectively.

The following two corollaries establish upper bounds on the number of admissible intermittent perturbation occurrences that preserve the convergence rate, under the assumption that the perturbations are zero-mean.
\begin{corollary}
\label{corollary:spike_unbiased_nonconvex}
Suppose the perturbations $\delta_i^{(t)}$ and $\varepsilon_{i+1}^{(t)}$ are zero-mean for all $t = 1, 2, \ldots, T$ and $i = 1, 2, \ldots, N-1$. Then, under Assumptions~\ref{assumption:smoothness}--\ref{assumption:smoothness of the components}, SGD achieves a convergence rate of $\mathcal{O}(T^{-\frac{1}{2}})$ provided that $Q_{\delta} \lesssim T^{\frac{1}{2}}$ and $Q_{\varepsilon} \lesssim T$.
\begin{proof}
    Substituting $\mathbb{E}^{\mathcal{G}_{i}}_t[\delta_{i}^{(t)}] = 0$, $\mathbb{E}_t^{\mathcal{H}_{i}}[\varepsilon_{i}^{(t)}] = 0$, and $\gamma = \mathcal{O}(T^{-\frac{1}{2}})$ into~\eqref{eq:l lipschitz1_new}, and using the definitions of $Q_{\delta}$ and $Q_{\varepsilon}$, we obtain
    \begin{equation}
    \begin{aligned}
        &\dfrac{1}{T}\sum_{t=0}^{T-1}\mathbb{E}\left[\left\Vert\nabla\ell(\w^{(t)})\right\Vert^2\right]\\
        \lesssim&\dfrac{1}{\sqrt{T}}\hspace{-0.5mm}+\hspace{-0.5mm}\dfrac{1}{T^{\frac{3}{2}}}\hspace{-1.0mm}\sum_{t=0}^{T-1}\hspace{-1.0mm}\left(\sum_{i=1}^{N-1}\hspace{-0.5mm}\mathbb{E}\hspace{-0.5mm}\left[  \left\| \delta_{i}^{(t)}  \right\|^2 \right]\hspace{-1.0mm}+\hspace{-1.0mm}\sum_{i=2}^{N}\hspace{-0.5mm}\mathbb{E}\hspace{-0.5mm}\left[  \left\| \varepsilon_{i}^{(t)}  \right\|^2 \right]\hspace{-0.5mm}\right)\hspace{-0.5mm}+\hspace{-0.5mm}\dfrac{1}{T}\hspace{-0.5mm}\sum_{t=0}^{T-1}\sum_{i=1}^{N-1}\hspace{-0.5mm}\mathbb{E}\hspace{-0.5mm}\left[  \left\| \delta_{i}^{(t)}  \right\|^4 \right]\\
        \lesssim&\dfrac{1}{\sqrt{T}}+\dfrac{Q_{\delta}+Q_{\varepsilon}}{T^{\frac{3}{2}}}+\dfrac{Q_{\delta}}{T}.
    \end{aligned}
    \end{equation}
    Therefore, if $Q_{\delta} \lesssim T^{\frac{1}{2}}$ and $Q_{\varepsilon} \lesssim T$, we have
    \begin{align*}
        \dfrac{1}{T}\sum_{t=0}^{T-1}\mathbb{E}\left[\left\Vert\nabla\ell(\w^{(t)})\right\Vert^2\right]\lesssim T^{-\frac{1}{2}}.
    \end{align*}
\end{proof}
\end{corollary}

\begin{corollary}
\label{corollary:spike_unbiased_pl}
    Suppose the perturbations $\delta_i^{(t)}$ and $\varepsilon_{i+1}^{(t)}$ are zero-mean for all $t = 1, 2, \ldots, T$ and $i = 1, 2, \ldots, N-1$. Then, under Assumptions~\ref{assumption:smoothness}--\ref{assumption:pl}, SGD achieves a convergence rate of $\tilde{\mathcal{O}}(T^{-1})$ provided that $Q_{\delta} \lesssim 1$ and $Q_{\varepsilon} \lesssim T$.
\begin{proof}
    Under the PL condition, substituting $\mathbb{E}^{\mathcal{G}_{i}}_t[\delta_{i}^{(t)}] = 0$, $\mathbb{E}_t^{\mathcal{H}_{i}}[\varepsilon_{i}^{(t)}] = 0$, and $\gamma = \mathcal{O}(T^{-1}\ln T)$ into~\eqref{eq:l lipschitz1_new}, and using the definitions of $Q_{\delta}$ and $Q_{\varepsilon}$, we obtain
    \begin{equation*}
    \begin{aligned}
        \mathbb{E}\hspace{-0.5mm}\left[\ell(\w^{(T)})\hspace{-0.5mm}-\hspace{-0.5mm}\ell^*\right]\hspace{-1.0mm}
        \lesssim&\dfrac{\ln T}{T}\hspace{-0.5mm}+\hspace{-0.5mm}\dfrac{\ln^2T}{T^2}\hspace{-0.5mm}\sum_{t=0}^{T-1}\hspace{-0.5mm}\left(1\hspace{-0.5mm}-\hspace{-0.5mm}\dfrac{\ln T}{T}\right)^{\hspace{-1.0mm}T-t}\hspace{-1mm}\left(\sum_{i=1}^{N-1}\mathbb{E}\hspace{-0.5mm}\left[  \left\| \delta_{i}^{(t)}  \right\|^2 \right]\hspace{-1mm}+\hspace{-1mm}\sum_{i=2}^{N}\mathbb{E}\hspace{-0.5mm}\left[  \left\| \varepsilon_{i}^{(t)}  \right\|^2 \right]\hspace{-0.5mm}\right)\\
        &+\dfrac{\ln T}{T}\sum_{t=0}^{T-1}\left(1-\dfrac{\ln T}{T}\right)^{T-t}\sum_{i=1}^{N-1}\mathbb{E}\left[  \left\| \delta_{i}^{(t)}  \right\|^4 \right]\\
        \lesssim& \dfrac{\ln T}{T}+\dfrac{\ln^2T}{T^2}\sum_{t=1}^{\max\{Q_{\delta}, Q_{\varepsilon}\}}\left(1-\dfrac{\ln T}{T}\right)^{t}+\dfrac{\ln T}{T}\sum_{t=1}^{Q_{\delta}}\left(1-\dfrac{\ln T}{T}\right)^{t}.
    \end{aligned}
    \end{equation*}
Evaluating the geometric sums yields
    \begin{equation}
    \begin{aligned}
        \mathbb{E}\left[\ell(\w^{(T)})-\ell^*\right]
        \lesssim&\dfrac{\ln T}{T}+\dfrac{\ln^2T}{T^2}\cdot\dfrac{1-\left(1-{\ln T}/{T}\right)^{\max\{Q_{\delta}, Q_{\varepsilon}\}}}{1-\left(1-{\ln T}/{T}\right)}\\
        &+\dfrac{\ln T}{T}\cdot\dfrac{1-\left(1-{\ln T}/{T}\right)^{Q_{\delta}}}{1-\left(1-{\ln T}/{T}\right)}\\
        \lesssim&\dfrac{\ln T}{T}(1+Q_{\delta})+\dfrac{\ln^2T}{T^2}\max\{Q_{\delta}+Q_{\varepsilon}\},
    \end{aligned}
    \end{equation}
    where the last inequality follows from the fact that $(1 - \ln T / T)^{\alpha} \geq 1 - \alpha \ln T / T$ for any $\alpha \geq 1$. Therefore, if $Q_{\delta} \lesssim 1$ and $Q_{\varepsilon}  \lesssim T$, we have
    \begin{align*}
        \mathbb{E}\left[\ell(\w^{(T)})-\ell^*\right]\lesssim \dfrac{\ln T}{T}.
    \end{align*}

\end{proof}
\end{corollary}

The following corollary establishes upper bounds on the number of admissible intermittent perturbation occurrences \emph{without} the zero-mean assumption, under nonconvex and PL settings respectively.
\begin{corollary}
\label{corollary:spike_biased}
    Under Assumptions~\ref{assumption:smoothness}--\ref{assumption:smoothness of the components}, SGD achieves a convergence rate of $\mathcal{O}(T^{-\frac{1}{2}})$ if $Q_{\delta} \lesssim T^{\frac{1}{2}}$ and $Q_{\varepsilon} \lesssim T^{\frac{1}{2}}$.
Moreover, if Assumption~\ref{assumption:pl} also holds, then SGD achieves a convergence rate of $\tilde{\mathcal{O}}(T^{-1})$ if $Q_{\delta} = \mathcal{O}(1)$ and $Q_{\varepsilon} = \mathcal{O}(1)$.
\begin{proof}
    Substituting $\gamma = \mathcal{O}(T^{-\frac{1}{2}})$ into~\eqref{eq:l lipschitz1_new} and using the definitions of $Q_{\delta}$ and $Q_{\varepsilon}$, we obtain
    \begin{equation}
    \begin{aligned}
        &\dfrac{1}{T}\sum_{t=0}^{T-1}\mathbb{E}\left[\left\Vert\nabla\ell(\w^{(t)})\right\Vert^2\right]
        \lesssim\dfrac{1}{\sqrt{T}}+\dfrac{Q_{\delta}+Q_{\varepsilon}}{T^{\frac{3}{2}}}+\dfrac{Q_{\delta}+Q_{\varepsilon}}{T}.
    \end{aligned}
    \end{equation}
    Therefore, if $Q_{\delta} \lesssim T^{\frac{1}{2}}$ and $Q_{\varepsilon} \lesssim T^{\frac{1}{2}}$, we have
    \begin{align*}
        \dfrac{1}{T}\sum_{t=0}^{T-1}\mathbb{E}\left[\left\Vert\nabla\ell(\w^{(t)})\right\Vert^2\right]\lesssim \dfrac{1}{\sqrt{T}}.
    \end{align*}
    
    Now suppose Assumption~\ref{assumption:pl} also holds. Substituting $\gamma = \mathcal{O}(T^{-1}\ln T)$ into~\eqref{eq:l lipschitz4_new} and using the definitions of $Q_{\delta}$ and $Q_{\varepsilon}$, we obtain
    \begin{equation}
    \begin{aligned}
        \mathbb{E}\left[\ell(\w^{(T)})-\ell^*\right]
        \lesssim&\dfrac{\ln T}{T}+\dfrac{\ln T}{T}\sum_{t=1}^{\max\{Q_{\delta}, Q_{\varepsilon}\}}\left(1-\dfrac{\ln T}{T}\right)^{t}\\
        \lesssim&\dfrac{\ln T}{T}\hspace{-1mm}\left(\hspace{-1mm}1\hspace{-0.5mm}+\hspace{-0.5mm}\dfrac{1-\left(1-{\ln T}/{T}\right)^{\max\{Q_{\delta}, Q_{\varepsilon}\}}}{1-\left(1-{\ln T}/{T}\right)}\hspace{-1mm}\right)\hspace{-1mm}
        \lesssim\dfrac{\ln T}{T}(\max\{Q_{\delta}, Q_{\varepsilon}\}\hspace{-0.5mm}+\hspace{-0.5mm}1).
    \end{aligned}
    \end{equation}
    Therefore, if $Q_{\delta} \lesssim 1$ and $Q_{\varepsilon} \lesssim 1$, we have
    \begin{align*}
        \mathbb{E}\left[\ell(\w^{(T)})-\ell^*\right]\lesssim\dfrac{\ln T}{T},
    \end{align*}
    which completes the proof.
\end{proof}
\end{corollary}

\begin{table}[t]
\centering
\label{table: error recover}
\caption{Upper bounds on the number of $\mathcal{O}(1)$-magnitude intermittent perturbation occurrences in the forward and backward passes that SGD can tolerate without deteriorating the convergence rate. ``Zero mean'' indicates that the intermittent 
perturbations have zero expectation.}
\renewcommand{\arraystretch}{1.3}
\begin{threeparttable}
\begin{tabular}{lcccc}
\toprule
Assumption                    & Convergence rate & Zero mean & $Q_{\delta}$ & $Q_{\varepsilon}$ \\ \midrule
\multirow{2}{*}{Non-convex}   & \multirow{2}{*}{$\mathcal{O}\left(\dfrac{1}{\sqrt{T}}\right)$}                 &  \cmark         & $\mathcal{O}(T^{\frac{1}{2}})$ & $\mathcal{O}(T)$ \\ \cline{3-5} 
                              &                  &  \xmark         & $\mathcal{O}(T^{\frac{1}{2}})$ & $\mathcal{O}(T^{\frac{1}{2}})$ \\ \midrule
\multirow{2}{*}{PL condition} & \multirow{2}{*}{$\mathcal{O}\left(\dfrac{\ln T}{T}\right)$}                 &      \cmark      &  $\mathcal{O}(1)$ & $\mathcal{O}(T)$  \\ \cline{3-5} 
                              &                  &  \xmark         & $\mathcal{O}(1)$ & $\mathcal{O}(1)$ \\ \bottomrule
\end{tabular}
\end{threeparttable}
\end{table}

Table~\ref{table: error recover} summarizes the admissible occurrence budgets of intermittent perturbations under different assumptions,
as derived from Corollaries~\ref{corollary:spike_unbiased_nonconvex}, \ref{corollary:spike_unbiased_pl}, and~\ref{corollary:spike_biased}.
Without the zero-mean assumption, the admissible occurrence thresholds for forward and backward perturbations coincide under both the nonconvex and PL settings.
In contrast, under zero-mean perturbations---i.e., when each perturbed forward activation and backward chain computation is (conditionally) unbiased---a pronounced asymmetry emerges:
the admissible occurrence frequency of backward perturbations can be relaxed to $\mathcal{O}(T)$, meaning that unbiased backward perturbations may persist throughout optimization without changing the convergence rate order.
For forward perturbations, however, the maximum allowable occurrence frequencies that preserve convergence are much smaller: $O(T^{1/2})$ in the nonconvex setting and $O(1)$ under the PL condition; exceeding these thresholds may lead to slower convergence or divergence (see Appendix~\ref{section: A2} for a one-dimensional example where persistent $O(1)$ forward perturbations cause convergence to a biased limit point).

\begin{remark}\label{rmk-intermittent}
In the intermittent regime, the admissible occurrence-budget bounds in Table~\ref{table: error recover}  are most naturally expressed in terms of the total perturbation counts $(Q_\delta, Q_\epsilon)$ over the horizon $T$. For instance, if a forward perturbation is injected once every $\Delta t$ steps, then $Q_\delta \approx \lceil T/\Delta t\rceil$ (and analogously for $Q_\varepsilon$), so varying $\Delta t$ directly controls the effective perturbation budget. The asymmetry highlighted in Table~\ref{table: error recover} can therefore be recast as an asymmetric budget requirement: in the unbiased setting, forward perturbations of order $\mathcal{O}(1)$ must be sufficiently infrequent (i.e., $Q_\delta$ must remain small), whereas backward perturbations may occur considerably more often (i.e., $Q_\varepsilon$ may be substantially larger). This budget-centric perspective foreshadows the phase-transition behavior observed in Section~\ref{section:7.2.3} as $\Delta t$ increases.
\end{remark}

\subsection{Convergence with Gradient Spikes}
In this subsection, we characterize conditions under which the optimization process still converges in the presence of gradient spikes. We say that the evaluated gradient at iteration $t$, denoted by $\tilde{\boldsymbol{u}}^{(t)}$, exhibits a \emph{gradient spike} if its deviation from the (unspiked) stochastic gradient evaluation satisfies
\[
\bigl\|\tilde{\boldsymbol{u}}^{(t)}-\boldsymbol{u}^{(t)}\bigr\|^2 = \mathcal{O}(1).
\]
By \eqref{error_variance} and \eqref{error_bias}, this occurs whenever there exists some $i\in\{1,2,\ldots,N-1\}$ such that either $\|\delta_i^{(t)}\|=\mathcal{O}(1)$ or $\|\varepsilon_i^{(t)}\|=\mathcal{O}(1)$.
Therefore, Table~\ref{table: error recover} directly implies how frequently gradient spikes may occur before they begin to slow down the convergence rate under the different assumptions.

\section{Experiments}
\label{section: Experiments}
In this section, we present numerical experiments to validate our theoretical 
findings. Specifically, we consider the logistic regression problem:
\begin{align}
    \min_{w\in\mathbb{R}^d}\ f(w):=\dfrac{1}{M}\sum_{l=1}^Mf_2(w;(h_l,y_l))+\rho\mathcal{R}(w).
\end{align}
Here, $\{(h_l, y_l)\}_{l=1}^{M}$ denote the training samples, where $h_l \in \mathbb{R}^d$ is the feature vector and $y_l \in \{+1,-1\}$ is the corresponding label. The logistic loss $f_2(w;(h_l,y_l))$ can be decomposed in the form of~\eqref{eq:pipeline-2} as:
\begin{equation}
\begin{aligned}
f_1(w;(h_l,y_l)):=-y_lh_l^\top w,\quad f_2(w;(h_l,y_l)):=\ln(1+\exp[f_1(w;(h_l,y_l))]).
\end{aligned} 
\end{equation}
The term $\mathcal{R}(w)$ is a regularizer and $\rho > 0$ is a given constant. 
We consider both a non-convex and a strongly convex choice of $\mathcal{R}$. 
In the non-convex case, following~\cite{antoniadis2011penalized,xin2021improved,
alghunaim2022unified,liang2023understanding}, we set $\mathcal{R}(w):=\sum_{j=1}^d{[w]_j^2}/{(1+[w]_j^2)}$,
where $[w]_j$ denotes the $j$-th entry of $w$. In the strongly convex case, 
we set $\mathcal{R}(w) = \|w\|_2^2$.
\begin{figure}[t!]
\centering
    \hspace{-5mm}
	\subfigure{
        \includegraphics[width=0.48\textwidth]{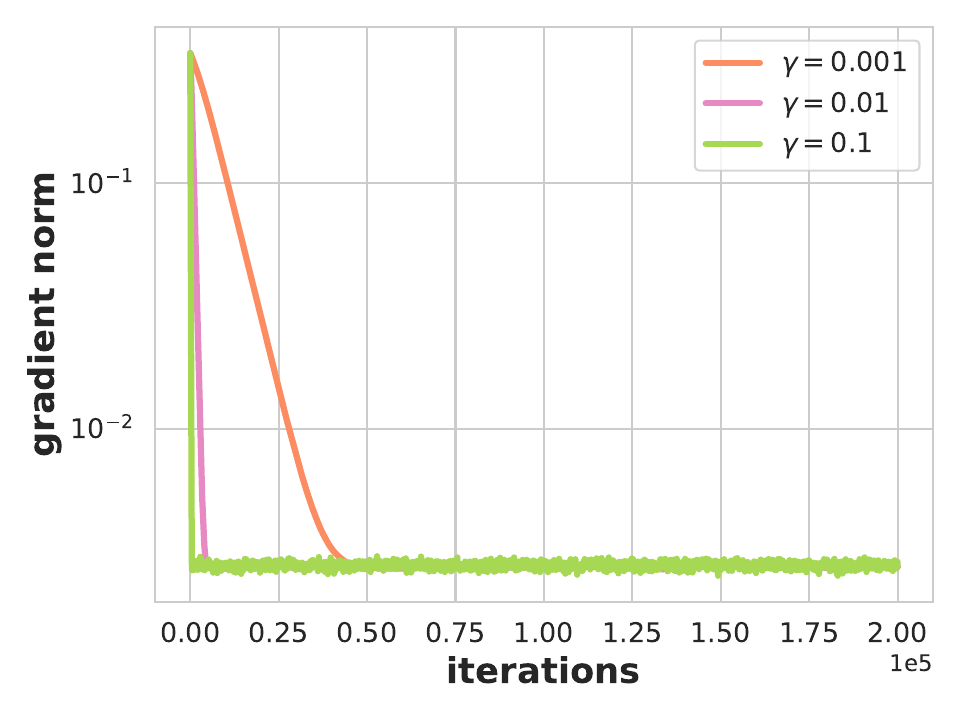}}
    \hspace{2mm}
	\subfigure{
		\includegraphics[width=0.48\textwidth]{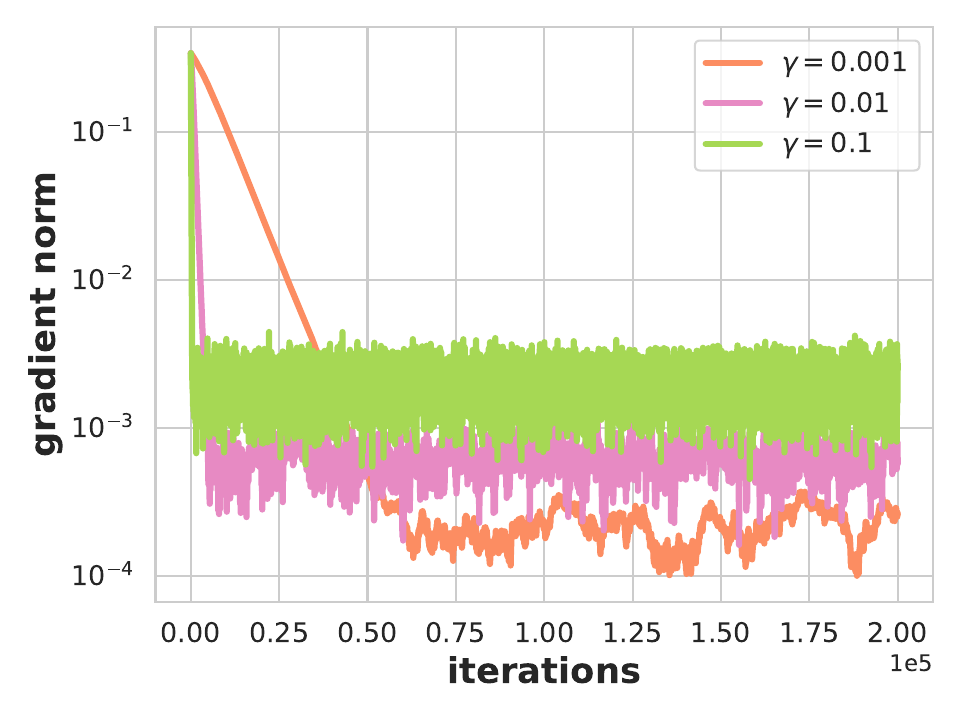}}
\caption{The convergence performance with forward and backward computation error with different step size $\gamma$ for the logistic regression task with non-convex regularization. (Left: $\sigma_f=2.0$, $\sigma_b=0.0$. Right: $\sigma_f=0.0$, $\sigma_b=2.0$.)}
\label{fig: nonconvex_unbiased_norm_show}
\end{figure}
\begin{figure}[t!]
\centering
    \hspace{-5mm}
	\subfigure{
        \includegraphics[width=0.48\textwidth]{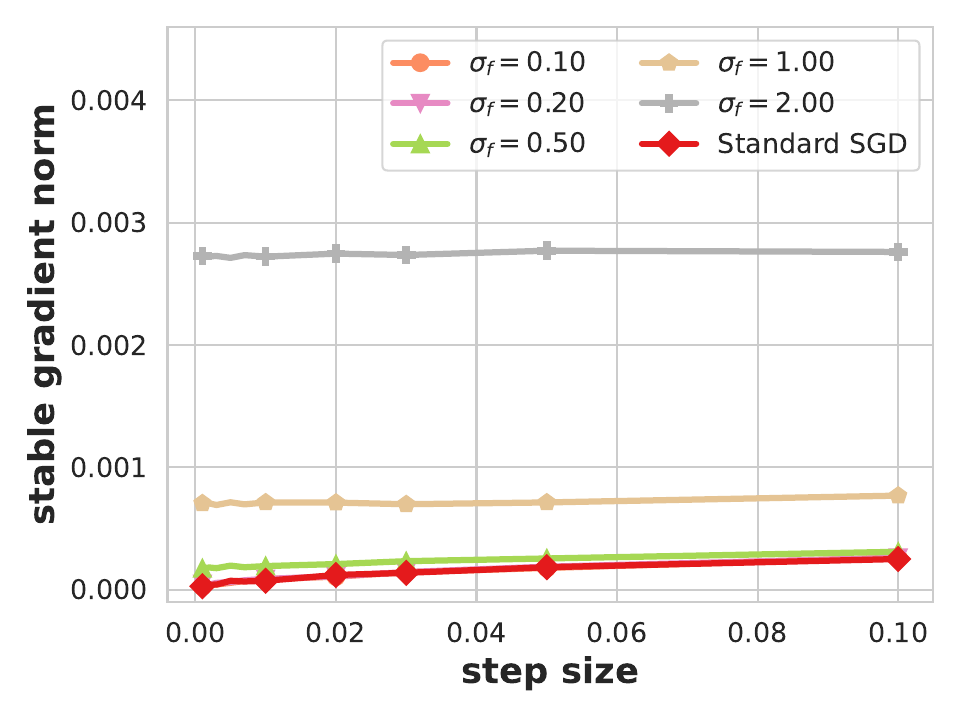}}
    \hspace{2mm}
	\subfigure{
		\includegraphics[width=0.48\textwidth]{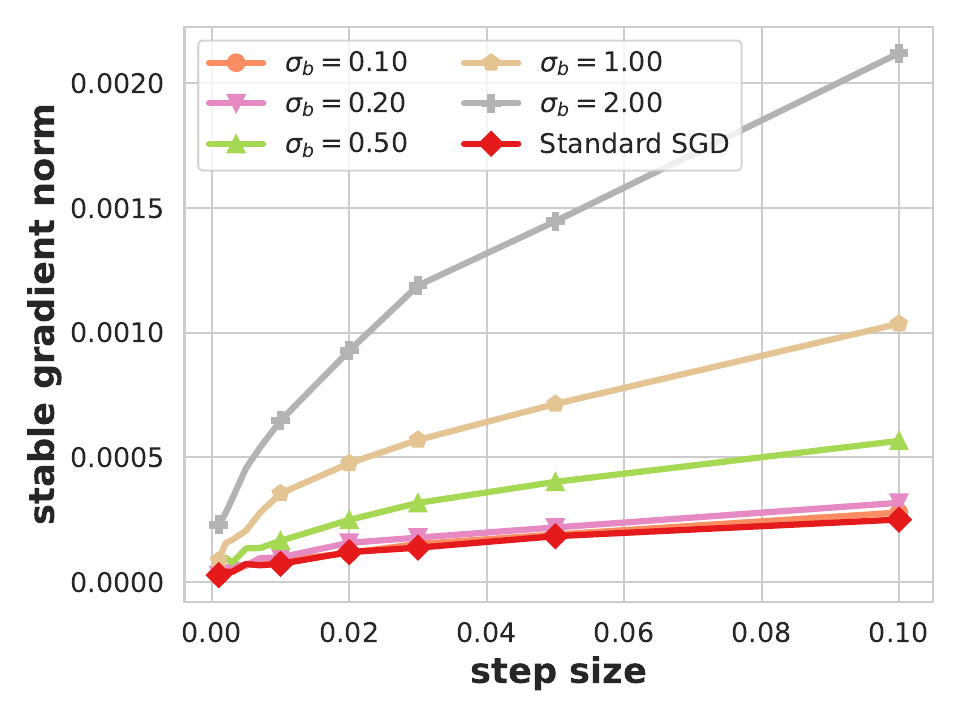}}
\caption{The relationship between the stable gradient norm and the step size $\gamma$ with forward and backward computation error for the logistic regression task with non-convex regularization. (Left: Varied $\sigma_f$ with $\sigma_b=0$. Right: Varied $\sigma_b$ with $\sigma_f=0$.)}
\label{fig: nonconvex_unbiased_iteration}
\end{figure}

\subsection{Experimental Setup}
We set $d = 10$, $M = 2000$, and use a fixed horizon $T=200{,}000$. To generate the data samples, we first draw $x^\star \sim \mathcal{N}(0, I_d)$ and feature vectors $h_l \sim \mathcal{N}(0, I_d)$. The corresponding label $y_l$ is obtained by drawing $z_l \sim \mathcal{U}(0,1)$ and setting $y_l = 1$ if $z_l \leq 1/(1+\exp(-h_l^{\top} x^\star))$ and $y_l = -1$ otherwise. At each gradient evaluation, we inject perturbations after computing the forward activation $f_1$ (forward perturbation $\delta_t$) and after computing the backward chain term $\partial f_2 / \partial f_1$ (backward perturbation $\varepsilon_t$). Let $\widetilde{\nabla f}(w_t)$ denote the resulting perturbed gradient. To emulate SGD with controlled variance, we form the stochastic gradient as
\[
g_t \;=\; \widetilde{\nabla f}(w_t) \;+\; \xi_t, \quad \xi_t \sim \mathcal{N}(0,\sigma_n^2 I_d), \quad \sigma_n = 0.001,
\]
where $\delta_t$ and $\varepsilon_t$ denote forward and backward computation perturbations, respectively, while $\xi_t$ denotes sampling noise.

To assess the effect of these perturbations on training, we use the gradient norm $\|\nabla f(w_t)\|$ as the primary evaluation metric. We smooth its trajectory via an exponentially weighted moving average (EWMA~\cite{hunter1986exponentially}) and define two derived quantities: the \emph{stable gradient norm}, the average EWMA value over the last $\lfloor T/2 \rfloor$ iterations; and the \emph{stable iteration complexity}, the first iteration at which the EWMA falls below $1.5\times$ the stable gradient norm.

As a baseline, we consider standard SGD with no perturbations in either pass (i.e., $\delta_t \equiv 0$ and $\varepsilon_t \equiv 0$ for all $t$), reducing the update to sampling noise only: 
\[ g_t = \nabla f(w_t) + \xi_t, \qquad w_{t+1} = w_t - \gamma g_t. \] 
This baseline appears as ``Standard SGD'' in the legend of plots where only one perturbation type is varied.

\subsection{Experimental Results}
In what follows, we report numerical results for three perturbation regimes (frequent zero-mean perturbations,
frequent non-zero-mean perturbations, and intermittent perturbations), and focus on a single observable throughout: whether the stable gradient norm continues to decrease toward the standard-SGD baseline as the step size shrinks, or instead saturates at a step-size-insensitive plateau. This behavior directly reflects the qualitative predictions of Section~\ref{subsection:continous error} for frequent perturbations (e.g., Remark~\ref{rmk-requrent}) and Section~\ref{subsection:intermittent error} for intermittent perturbations parameterized by occurrence budgets (e.g., Remark~\ref{rmk-intermittent}). Specifically, Section~\ref{section:7.2.1} considers frequent zero-mean perturbations and highlights the asymmetry between forward and backward perturbations; Section~\ref{section:7.2.2} isolates persistent non-zero-mean bias, which manifests as an irreducible plateau that cannot be eliminated by shrinking the step size alone; Section~\ref{section:7.2.3} studies intermittent perturbations, where varying the injection interval controls the effective occurrence budget and produces a clear phase transition.
\begin{figure}[t!]
\centering
    \hspace{-5mm}
	\subfigure{
        \includegraphics[width=0.48\textwidth]{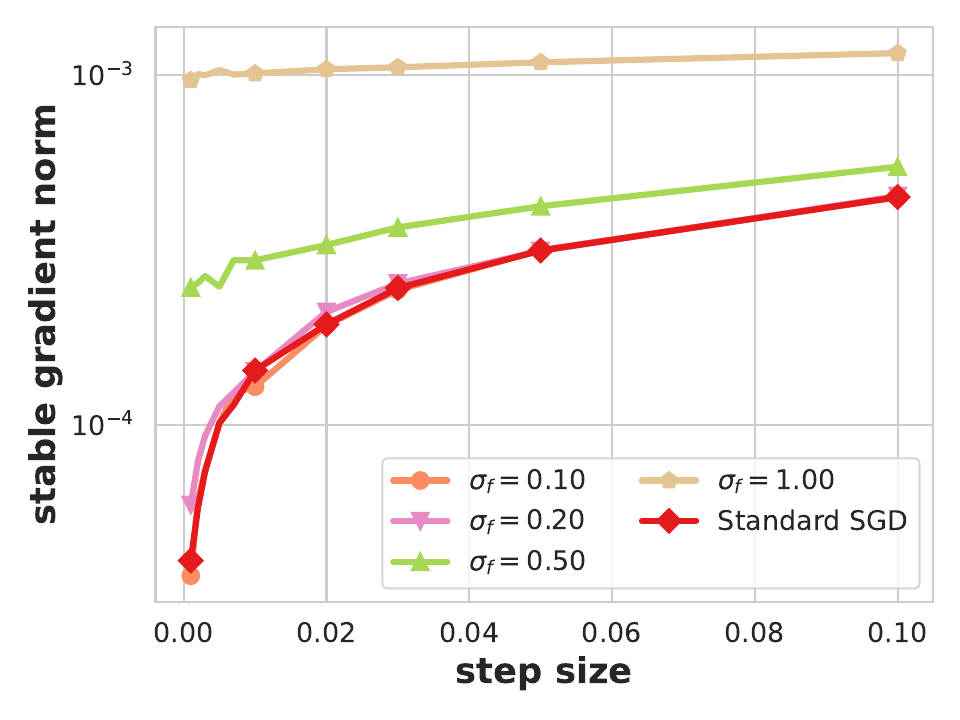}}
    \hspace{2mm}
	\subfigure{
		\includegraphics[width=0.48\textwidth]{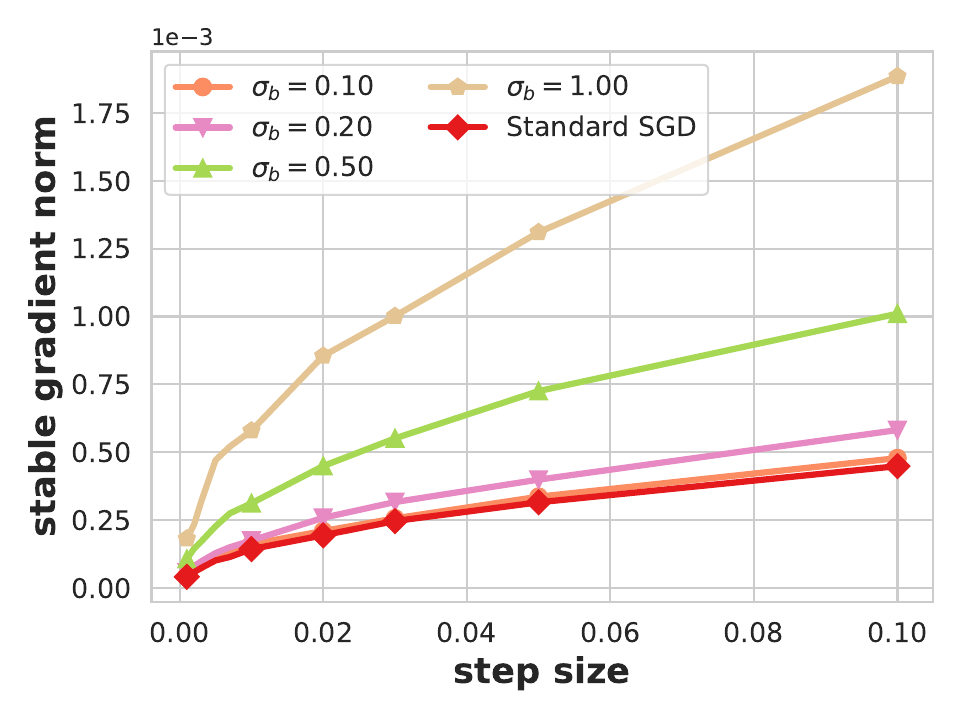}}
\caption{The relationship between the stable gradient norm and the step size $\gamma$ with forward and backward computation perturbations for the logistic regression task with strongly convex (hence PL) regularization. (Left: Varied $\sigma_f$ with $\sigma_b=0$. Right: Varied $\sigma_b$ with $\sigma_f=0$.)}
\label{fig: convex_unbiased_iteration}
\end{figure}

\subsubsection{Convergence with frequent zero-mean perturbations}
\label{section:7.2.1}
To validate convergence under zero-mean forward and backward perturbations, we inject symmetric noise $\delta$ in the forward pass and $\varepsilon$ in the backward pass, drawn independently and uniformly as $\delta\sim\mathcal{U}(-\sigma_f/\sqrt{d},\,\sigma_f/\sqrt{d})$ and $\varepsilon\sim\mathcal{U}(-\sigma_b/\sqrt{d},\,\sigma_b/\sqrt{d})$, where $\sigma_f$ and $\sigma_b$ control the respective perturbation magnitudes. The regularization coefficient is set to $\rho = 0.001$ in the nonconvex case and $\rho = 0.1$ in the strongly convex case.

In the nonconvex setting (Figures~\ref{fig: nonconvex_unbiased_norm_show} and~\ref{fig: nonconvex_unbiased_iteration}), the standard-SGD baseline exhibits the expected behavior: the stable gradient norm decreases and tends to $0$ as $\gamma\to 0$. Under zero-mean backward perturbations, the same qualitative behavior persists: Figure~\ref{fig: nonconvex_unbiased_norm_show} (right) shows smaller stabilized gradient norms for smaller step sizes, and Figure~\ref{fig: nonconvex_unbiased_iteration} (right) confirms that the stable gradient norm continues to decrease toward $0$ as $\gamma\to 0$ across all tested $\sigma_b$ values. 
This is consistent with Remark~\ref{rmk-requrent} (see also Remark~\ref{rem:freq-thresholds}): in the frequent unbiased regime, backward perturbations are comparatively benign and contribute primarily to the optimization error, so shrinking $\gamma$ reduces the stable gradient norm without inducing a step-size-insensitive plateau.

For zero-mean forward perturbations, however, the behavior depends on the perturbation magnitude. Figure~\ref{fig: nonconvex_unbiased_iteration} (left) indicates that for small perturbation levels the stable gradient norm still decreases with $\gamma$, but once the forward perturbation magnitude becomes non-negligible (e.g., $\sigma_f\ge 0.5$), the stable gradient norm saturates at a nonzero plateau even as $\gamma\to 0$. This plateau is further corroborated by the trajectories in Figure~\ref{fig: nonconvex_unbiased_norm_show} (left) under $\sigma_f=1.0$, where shrinking $\gamma$ yields little improvement. {This behavior illustrates the forward/backward asymmetry in Remark~\ref{rmk-requrent} (see also ~Remark~\ref{rem:freq-thresholds}): unless the forward perturbations satisfy the stricter decay conditions in Corollary~\ref{corollary:continous error}, the bound predicts a non-vanishing plateau even as $\gamma \to 0$.}

In the strongly convex (hence PL) setting (Figure~\ref{fig: convex_unbiased_iteration}), the same qualitative picture holds: the stable gradient norm under backward perturbations can still be driven toward $0$ as $\gamma\to 0$, whereas sufficiently large forward perturbations pin it at a nonzero plateau regardless of the step size. This is again consistent with Remark~\ref{rmk-requrent} together with the PL part of Corollary~\ref{corollary:continous error}: unbiased backward perturbations do not prevent the stable gradient norm from decreasing with $\gamma$, whereas sufficiently large forward perturbations pin it at a nonzero, step-size-insensitive plateau.

Overall, Figures~\ref{fig: nonconvex_unbiased_norm_show}--\ref{fig: convex_unbiased_iteration} provide a direct illustration of Remark~\ref{rmk-requrent}: decreasing the step size mainly reduces the optimization error component, whereas the perturbation-induced component manifests as a step-size-insensitive plateau when forward perturbations are too large. In line with Remark~\ref{rem:freq-thresholds} and Corollary~\ref{corollary:continous error}, the forward perturbations must satisfy stricter control conditions than the backward perturbations in order to avoid this plateau and recover the standard-SGD trend as $\gamma \to 0$.

\subsubsection{Convergence with frequent non-zero-mean computation perturbations}
\label{section:7.2.2}
\begin{wrapfigure}{r}{0.6\textwidth}
\centering
\vspace{-15pt}
\includegraphics[width=0.56\textwidth]{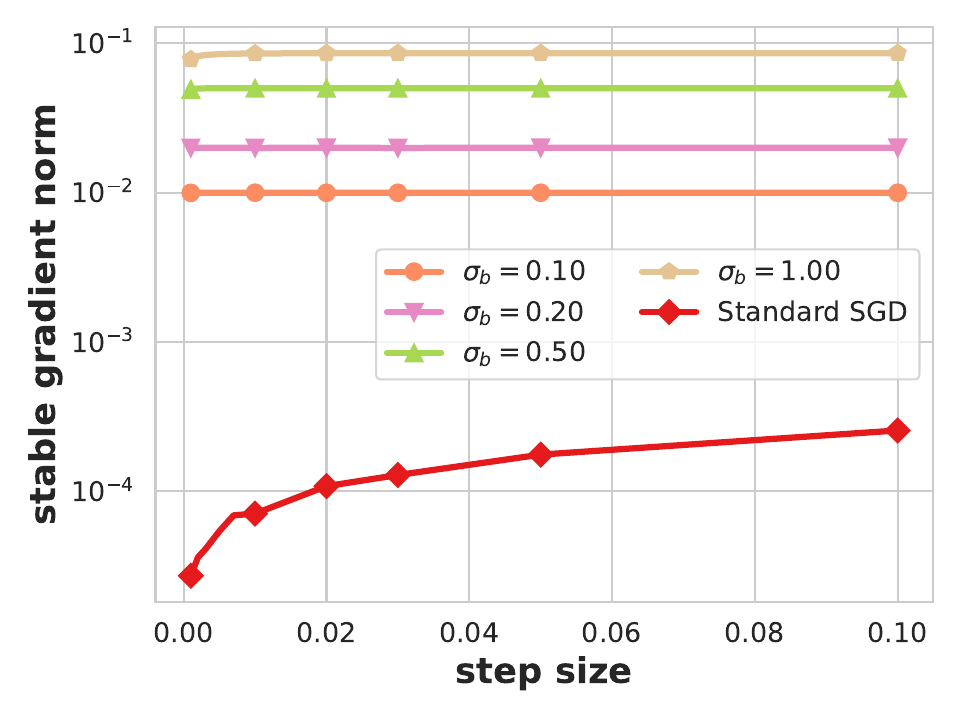}
\vspace{-10.5pt}
\caption{The relationship between the stable gradient norm and the step size $\gamma$ with non-zero-mean backward computation perturbations.}
 \vspace{-8pt}
\label{fig: nonconvex_biased_iteration}
\end{wrapfigure}
To isolate the effect of a persistent mean bias, we inject a non-zero-mean perturbation at every iteration in the \emph{backward} computation while keeping the forward pass unperturbed: $\varepsilon\sim\mathcal{U}(0,\sigma_b/\sqrt{d})$.
Figure~\ref{fig: nonconvex_biased_iteration} shows a clear qualitative difference from the zero-mean case:
even as $\gamma\to 0$, the stable gradient norm no longer tends to $0$ once $\sigma_b>0$, and instead saturates at a nonzero plateau that grows with $\sigma_b$.
The curve corresponding to zero perturbation magnitude (i.e., no injected computation perturbations) recovers the standard-SGD behavior, with the stable gradient norm decreasing toward $0$ as $\gamma\to 0$.

This is consistent with the conditional-bias requirements in Corollary~\ref{corollary:continous error} (see Remark~\ref{rem:freq-thresholds}) and the bias characterization in Theorem~\ref{thm-utilde-u}: once $\|\mathbb{E}_t^{\mathcal{H}_{i+1}}[\varepsilon_{i+1}^{(t)}]   \|$ is persistently nonzero (here induced by the non-zero-mean sampling), the theory contains an irreducible bias-driven term, which manifests empirically as a non-vanishing plateau that cannot be removed by shrinking $\gamma$ alone.

\subsubsection{Convergence with intermittent forward computation perturbations}
\label{section:7.2.3}
To investigate convergence properties under intermittent perturbations in forward propagation, we introduce zero-mean noise $\delta\sim\mathcal{U}(-\sigma_f/\sqrt{d},\sigma_f/\sqrt{d})$ during forward propagation at intervals of $\Delta t_f$ iterations in logistic regression with both non-convex and convex regularization terms. The parameter $\Delta t_f$ governs the noise injection frequency. The regularization coefficient $\rho$ is configured as 0.001 for non-convex scenarios and 0.5 for convex cases.

Figures~\ref{fig: unbiased_intermittent_nonconvex_iteration} and \ref{fig: unbiased_intermittent_convex_iteration} report the stable gradient norm versus the step size $\gamma$ under intermittent forward perturbations with injection interval $\Delta t_f$;
these correspond to the nonconvex and PL intermittent regimes analyzed in Corollary~\ref{corollary:spike_unbiased_nonconvex} and Corollary~\ref{corollary:spike_unbiased_pl}, respectively.
The curve labeled ``Standard SGD'' corresponds to standard SGD with no injected computation perturbations, and its stable gradient norm decreases toward $0$ as $\gamma\to 0$. When $\Delta t_f$ is small, the stable gradient norm exhibits a nonzero plateau even as $\gamma\to 0$, resembling the frequent-perturbation regime. As $\Delta t_f$ increases and spikes become rarer, the plateau height decreases and the curves progressively recover the error-free behavior, with the stable gradient norm again tending to $0$ as $\gamma\to 0$.
Moreover, for larger forward perturbation magnitudes, recovery requires a larger $\Delta t_f$, consistent with the fact that stronger spikes demand stricter sparsity.

Injecting a forward perturbation once every $\Delta t_f$ iterations implies that the number of forward spike iterations scales as $Q_\delta \approx \lceil T/\Delta t_f\rceil$ over a horizon $T$. 
Therefore, sweeping $\Delta t_f$ directly tunes the effective occurrence budget $Q_\delta$ appearing in Section~6.2 and Table~1.
In particular, Corollary~\ref{corollary:spike_unbiased_nonconvex} (nonconvex) and Corollary~\ref{corollary:spike_unbiased_pl} (PL) specify the admissible order of $Q_\delta$ (and $Q_\epsilon$) that preserves the error-free rate under zero-mean intermittent spikes,
and Remark~\ref{rmk-intermittent} interprets the resulting thresholds as a budget requirement; the progressive recovery in Figures~\ref{fig: unbiased_intermittent_nonconvex_iteration}--\ref{fig: unbiased_intermittent_convex_iteration} as $\Delta t_f$ increases is consistent with this budget-threshold picture.

\begin{figure}[t!]
\centering
    \hspace{-5mm}
	\subfigure{
        \includegraphics[width=0.48\textwidth]{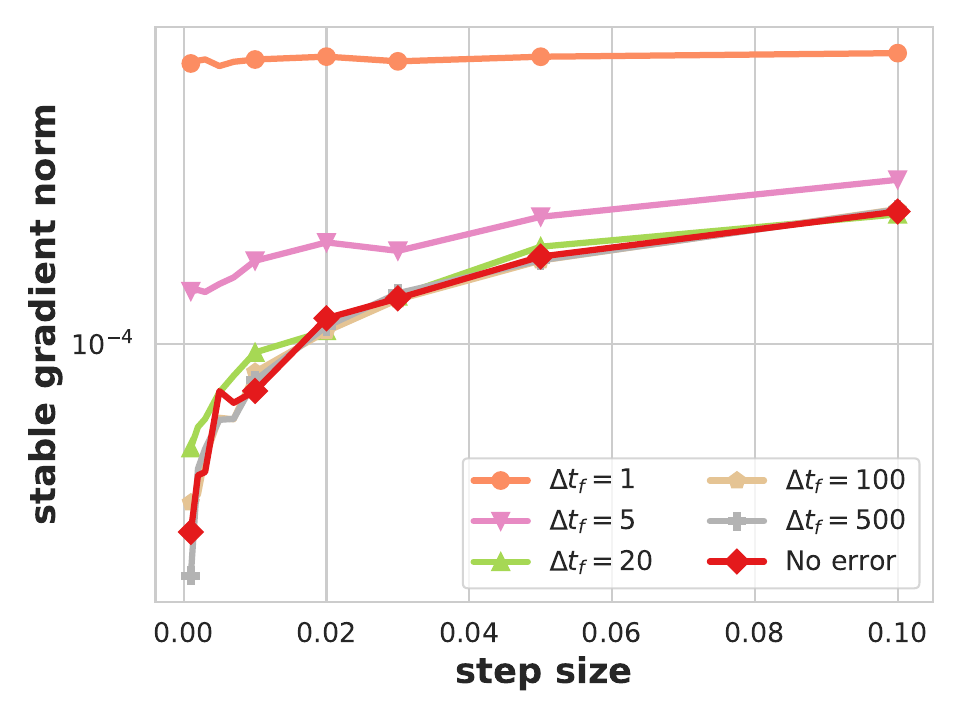}}
    \hspace{2mm}
	\subfigure{
\includegraphics[width=0.48\textwidth]{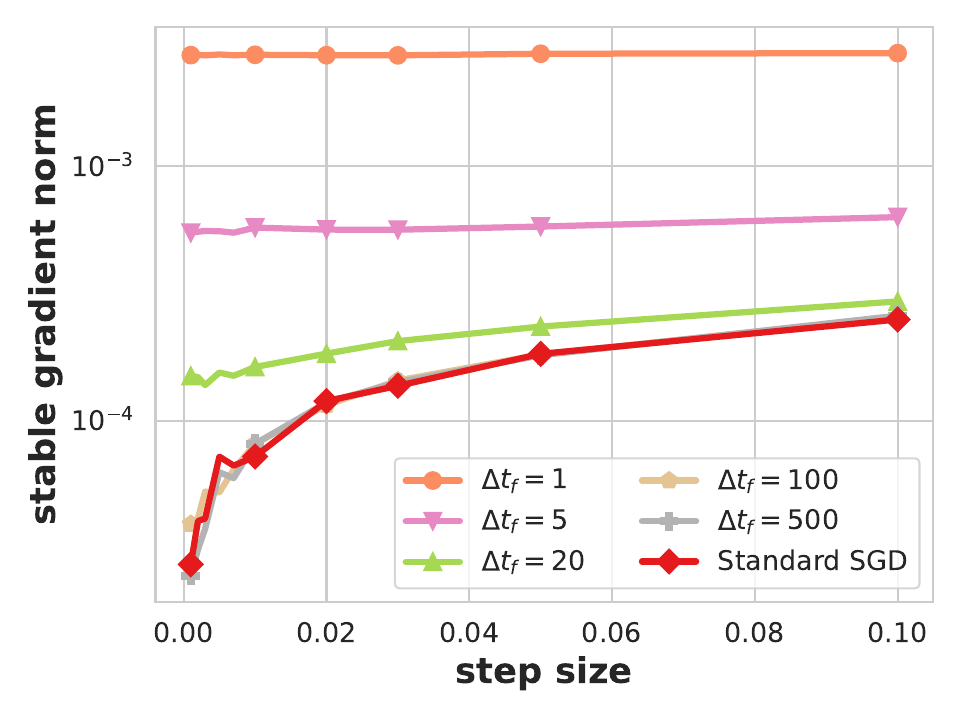}}
\caption{The relationship between the stable gradient norm and the step size $\gamma$ with intermittent forward perturbations for the logistic regression task in the non-convex scenario. (Left: $\sigma_f=1.0$. Right: $\sigma_f=2.0$.)}
\label{fig: unbiased_intermittent_nonconvex_iteration}
\end{figure}
\begin{figure}[t!]
\centering
    \hspace{-5mm}
	\subfigure{
        \includegraphics[width=0.48\textwidth]{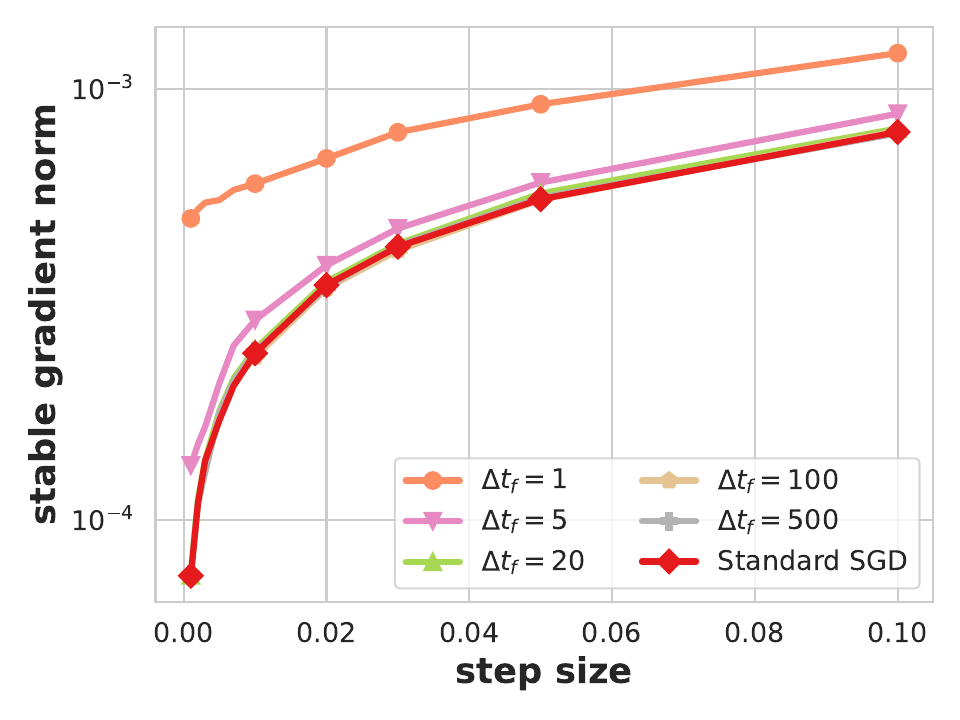}}
    \hspace{2mm}
	\subfigure{
        \includegraphics[width=0.48\textwidth]{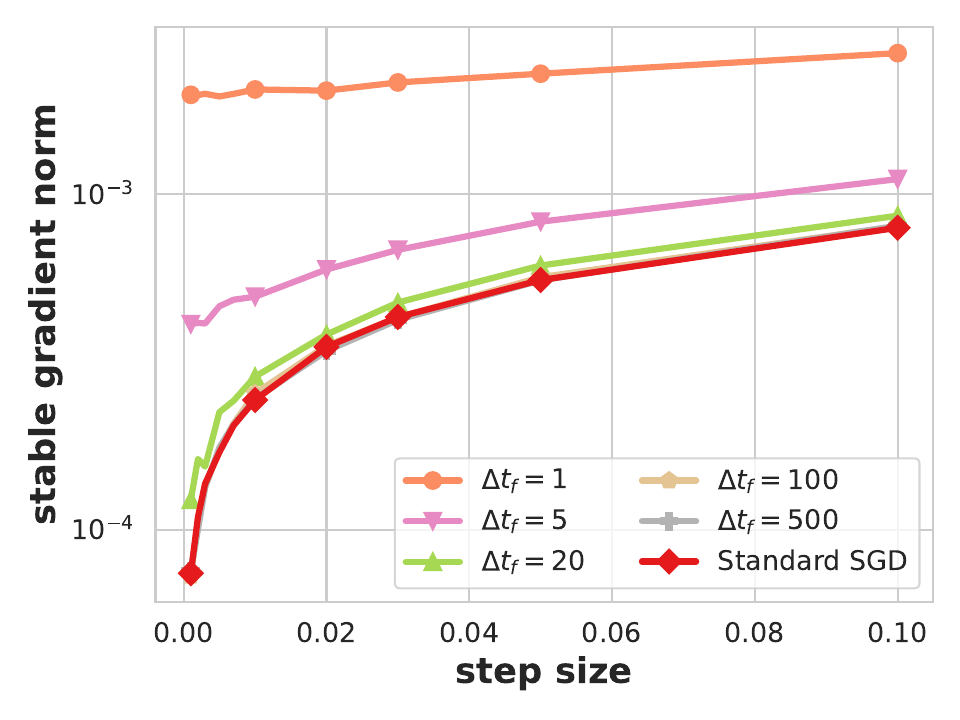}}
\caption{The relationship between the stable gradient norm and the step size $\gamma$ with intermittent forward perturbations for the logistic regression task in the strongly convex (hence PL) scenario. (Left: $\sigma_f=1.0$. Right: $\sigma_f=2.0$.)}
\label{fig: unbiased_intermittent_convex_iteration}
\end{figure}

\section{Conclusion and Future Works}
\label{section: Conclusion}
This paper studies the effect of computational perturbations on gradient propagation in stochastic gradient descent, with particular emphasis on their accumulation across successive operators. Through rigorous analysis of error dynamics in both the forward and backward passes, we establish convergence guarantees for SGD under general non-convex objectives and the Polyak--\L{}ojasiewicz condition in the presence of persistent computational inaccuracies. Our theoretical framework characterizes the admissible magnitudes for continuous perturbations and the tolerable frequencies for intermittent perturbations required to preserve standard convergence rates. Experimental results provide empirical validation of our theoretical propositions.

Future research directions encompass several key aspects. First, it remains to be seen whether simple correctors can restore error-free SGD rates under frequent $O(1)$ forward perturbations, for example through delta-based activation compression combined with error-compensation schemes that render accumulated distortion summable \cite{wang2022fine,richtarik2021ef21,fatkhullin2023momentum}. Second, explicit forward-pass de-biasing techniques, may eliminate the nonlinearity-induced bias terms underlying Theorem~\ref{thm-utilde-u}. Finally, extending the theory to deep and realistic mixed-precision or quantized pipelines, where perturbations are state-dependent and interact with momentum or adaptive stepsizes, presents a significant challenge.

\newpage
{
\small

\bibliography{reference}
\bibliographystyle{abbrv}
}

\newpage

\resettocdepth

\newpage
\appendix

\begin{center}
\Large
    \textbf{Appendix}
\end{center}

\section{Counter examples}
In this section, we present some counter examples that has not discussed in the previous sections.
\subsection{Zero-mean Noise can Lead to Biased Evaluated Gradient}
\label{section: example_zero-mean noise}
Here is a straightforward example that illustrates the theoretical challenge: even perturbations with zero expectation can introduce bias in the evaluated gradient.
%------------------------------------------------
% Biased Gradient Example
%------------------------------------------------
\begin{example}
Take the Sigmiod function $f_i(y) = \sigma(y) := (1 + e^{-y})^{-1}$, whose derivative given by $\sigma'(y) = \sigma(y)(1 - \sigma(y))$ is symmetric and reaches its peak when $y=0$. If we assume $y_{i-1}^{(t)} = 0$ and $\widetilde{y}_{i-1}^{(t)}$ follows a symmetric two-point distribution on $\{-a,a\}$ with equal masses. Consequently, $\mathbb{E}[\widetilde{y}_{i-1}^{(t)}] = 0 = y_{i-1}^{(t)}$, showing that $\widetilde{y}_{i-1}^{(t)}$ is an unbiased estimate of ${y}_{i-1}^{(t)}$. We find that $f_i'(y_{i-1}^{(t)}) = \sigma'(0)$. The expected value of the derivative calculates as:
\begin{align*}
\mathbb{E}[f_i'(\widetilde{y}_{i-1}^{(t)})] &= \frac{1}{2}\sigma'(a) + \frac{1}{2}\sigma'(-a)= \sigma'(a).
\end{align*}
Since $a>0$, it follows that $\sigma'(a) < \sigma'(0)$, leading to $\mathbb{E}[f_i'(\widetilde{y}_{i-1}^{(t)})] < f_i'(y_{i-1}^{(t)})$. Suppose $f_i'$ denotes the relevant gradient component of $\nabla f_i$ (like a partial derivative concerning the first variable), this inequality implies:
\begin{align}
\mathbb{E}[\nabla f_i (\tilde{y}_{i-1}^{(t)},w_i^{(t)})]\neq \nabla f_i ({y}_{i-1}^{(t)},w_i^{(t)}). \nonumber
\end{align}
\end{example}

This example highlights a key asymmetry between forward and backward perturbations: even when the injected noise is mean-zero at the level of intermediate variables, the resulting gradient estimate can become biased due to nonlinearity of the computational graph. This phenomenon motivates the higher-moment bias control terms (e.g., the $\mathbb{E}\|\delta\|^4$ contributions), and explains why the forward perturbation thresholds in Section~\ref{section: Recover from computation error and gradient spike} are typically more stringent than their backward counterparts.

\subsection{Non-convergence Induced by Frequent $\mathcal{O}(1)$ Magnitude Perturbations in Forward Propagation}
\label{section: A2}
We first construct a counterexample under gradient descent algorithm to demonstrate non-convergence caused by persistent $\mathcal{O}(1)$ magnitude perturbations.

Consider the deterministic gradient descent (GD) algorithm with objective function defined for all $x\in\mathbb{R}$ as:
\begin{align*}
f(x) = x^2.
\end{align*}
At iteration $t$, the forward propagation introduces an additive perturbation term yielding $\tilde{f}(x^{(t)})=(x^{(t)}+\delta)^2$, where $\delta\in\mathbb{R}$ represents a fixed perturbation constant. The corrupted gradient computation becomes $\boldsymbol{u}^{(t)} := 2(x^{(t)} + \delta)$. And this leads to the iteration scheme:
\begin{align}
x^{(t+1)} = x^{(t)} - \gamma\boldsymbol{u}^{(t)} =  x^{(t)} - 2\gamma(x^{(t)} + \delta) = (1 - 2\gamma)x^{(t)} - 2\gamma\delta,
\end{align}
where $\gamma$ denotes the learning rate.

For any $\gamma<1/2$, the sequence $\{x^{(t)}\}$ converges to $-\delta$ as $T\to+\infty$, demonstrating systematic deviation from the true minimum at zero. This phenomenon underscores the inherent instability of gradient-based methods when subjected to persistent $\mathcal{O}(1)$ forward propagation perturbations.

This example shows that persistent $\mathcal{O}(1)$-magnitude forward perturbations can shift the effective optimization dynamics, causing SGD to converge to a biased limit point (or, more generally, to a neighborhood that does not shrink with $T$). This concretely illustrates the ``error-floor'' interpretation of the bias-type terms in Theorem~\ref{thm: 5.2} and the necessity of the decay/sparsity conditions stated in Corollary~\ref{corollary:continous error}.

\subsection{Non-convergence With Top-K Compressor}
\label{section: A3}
We construct a counter example to illustrate the non-convergence of SGD algorithm with standard non-convex assumption under Top-K compression. 

Specifically, we denote the sample and parameter vectors as:
\begin{align*}x = \left( x_1,x_2 \right)^\top,\quad w_1 = \left( w_{11},w_{12} \right)^\top,\quad w_2 = \left( w_{21},w_{22} \right)^\top, \quad A = \begin{pmatrix}
    a &b \\
    c &d
\end{pmatrix}.
\end{align*} 
Then, we take the objective function $f(\boldsymbol{x},\w):=f_2(f_1(\boldsymbol{x},\w_1), \w_2)$, where the function $f_1$ and $f_2$ are defined as:
\begin{align*}f_1(x, w_1) = A (x \odot w_1) =\begin{pmatrix} a x_1 w_{11} + b x_2 w_{12}\\ c x_1 w_{11} + d x_2 w_{12}  \end{pmatrix},\quad f_2(y_1,w_2) = \langle y_1,w_2 \rangle^2 + \frac{1}{2} \| y_1 \|^2.\end{align*}

Then, we introduce the Top-1 compression operator $\mathcal{C}_G$, which select the element with the maximum absolute value and set all the other elements to zero. With $\mathcal{C}_G$, the gradients of $f_1$ and $f_2$ can be computed as follows:
\begin{equation}
\begin{aligned}
    u_2 &:= \nabla_{w_2} f_2(y_1,w_2) = 2 \langle {y}_1, w_2 \rangle {y}_1,\\
    {v}_1 &:= \nabla_{{y}_1} f_2({y}_1,w_2) = 2 \langle {y}_1, w_2 \rangle {w}_2 + {y}_1 \\
    {u}_1 &:= \left(\nabla_{{w}_1} f_1({x},{w}_1)\right)^{\top} {\mathcal{C}_G} \left({v}_1\right) = \left( \begin{array}{cc}
             a x_1 &b x_2  \\
             c x_1 &d x_2
        \end{array} \right)^{\top} {\mathcal{C}_G} \left(2 \langle {y}_1, {w}_2 \rangle {w}_2 + {y}_1\right),
\end{aligned}
\end{equation}
where $\mathcal{C}_G$ denotes the Top-1 compression operator.

\begin{figure}[t!]
\centering
    \hspace{-5mm}
	\subfigure{
        \includegraphics[width=0.48\textwidth]{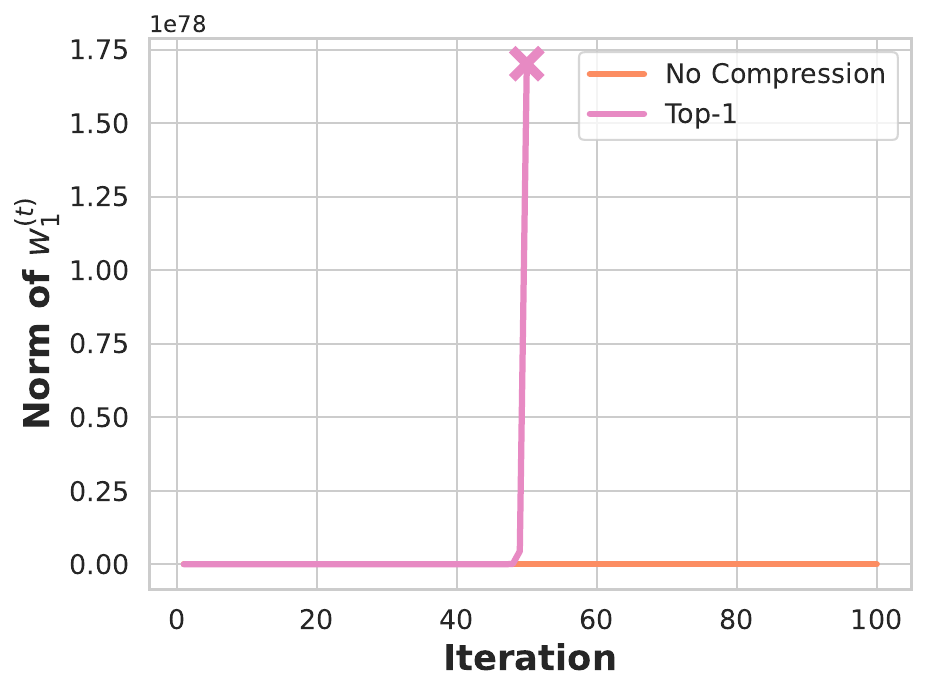}}
    \hspace{2mm}
	\subfigure{
        \includegraphics[width=0.48\textwidth]{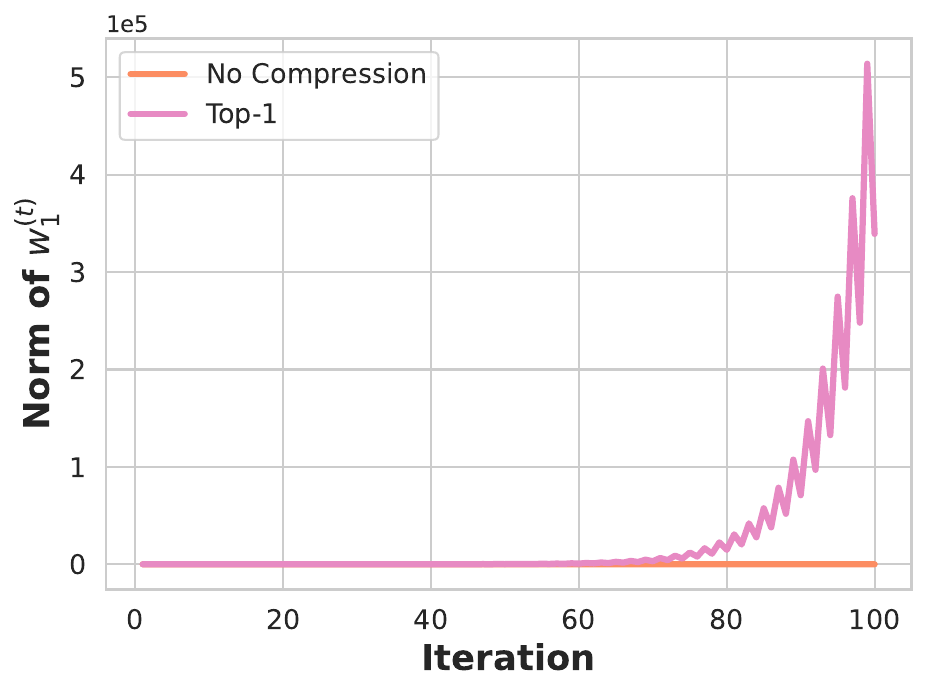}}
\caption{The norm of the parameter $w_1^{(t)}$ obtained by SGD with Top-1 compressor and the comparison with the non-compression case. Note that the Top-1 cases all fail to converge. (Left: $\gamma=0.1$ and $\beta=0$. Right: $\gamma=0.05$ and $\beta=0.9$.)}
\label{fig: non_convergence_top1}
\end{figure}

Set the initial values ${w}_1^{(1)} = \left( s, s  \right)^{\top}$ and ${w}_2^{(1)} = \left(  0, 0 \right)^{\top}$, and fix ${x}^{(k)} \equiv x = \left( 1, 1 \right)^{\top}$ for all $t = 1,2,3,\cdots$, where $s$ is a given coefficient. Then we can derive the recurrence relations for the $t$-th iteration as follows:
\begin{equation*}
\begin{aligned}
{y}_1^{(t)} &= \left( \begin{array}{cc} a &b \\ c &d \end{array} \right) {w}_1^{(t)}, \quad {v}_1^{(t)} = {\mathcal{C}_G}\left( {y}_1^{(t)} \right), \quad {u}_1^{(t)} = \left( \begin{array}{cc} a &b \\ c &d \end{array} \right)^{\top} {v}_1^{(t)}, \\
{w}_1^{(t+1)} &= {w}_1^{(t)} - \gamma {u}_1^{(t)}, \quad {u}_2^{(t)} = {0}, \quad {w}_2^{(t)} \equiv \left( \begin{array}{c} 0 \\ 0 \end{array} \right).
\end{aligned}
\end{equation*}
If we take $a = -15, b = 13, c = -5, d = 9$ and set the learning rate $\gamma = 0.1$, our counterexample converges to ${w}_1 = {0}$ without compression (i.e., when $\mathcal{C}_G$ is the identity operator). Specifically, with no computation perturbation, we have:
\begin{align*}
{y}_1^{(1)} = \left( \begin{array}{c} -2s \\ 4s \end{array} \right), \quad {v}_1^{(1)} = \left( \begin{array}{c} -2s \\ 4s \end{array} \right), \quad {u}_1^{(1)} = \left( \begin{array}{c} 10s \\ 10s \end{array} \right), \quad {w}_1^{(2)} = \left( \begin{array}{c} 0 \\ 0 \end{array} \right)
\end{align*}

With momentum SGD, the updating rule of ${w}_1^{(t)}$ and ${w}_2^{(t)}$ becomes:
\[
    {w}_1^{(t)} = {w}_1^{(t-1)} - \eta {u}_1^{(t)} + \beta \left( {w}_1^{(t-1)} - {w}_1^{(t-2)} \right)
\]
Hence, when we take $a = -5, b = -4, c = -5, d = 3, \gamma = 0.05 ,\beta = 0.9$, numerical study implies that the momentum SGD without the compression error achieve the convergence as Figure \ref{fig: non_convergence_top1}. 

However, Figure \ref{fig: non_convergence_top1} also illustrates that the parameter cannot convergence with the Top-1 compression with the same setup as the no-compression cases in both SGD optimization and momentum SGD. This result validates our theoretical finding that it cannot converge under frequent perturbation with $\mathcal{O}(1)$ magnitude.

\end{document}